\documentclass{amsart}
\usepackage{graphicx}
\usepackage{amssymb,amscd,amsthm,amsxtra}
\usepackage{latexsym}
\usepackage{epsfig}
\usepackage{mathtools}
\usepackage{esint}
\usepackage{color}

\vfuzz2pt 
\hfuzz2pt 
\newtheorem{thm}{Theorem}[section]
\newtheorem{cor}[thm]{Corollary}
\newtheorem{lem}[thm]{Lemma}
\newtheorem{prop}[thm]{Proposition}
\theoremstyle{definition}

\theoremstyle{remark}
\newtheorem{rem}[thm]{Remark}
\numberwithin{equation}{section}
\newcommand{\be}{\begin{equation}}
\newcommand{\ee}{\end{equation}}

\newcommand{\R}{\mathbb R}

\newcommand{\eps}{\epsilon}

\newcommand{\p}{\partial}

\newcommand{\comment}[1]{}

\begin{document}

\title[Uniform density estimates]{Uniform density estimates and $\Gamma$-convergence for the Alt-Phillips functional of negative powers}
\author{D. De Silva}
\address{Department of Mathematics, Barnard College, Columbia University, New York, NY 10027}
\email{\tt  desilva@math.columbia.edu}
\author{O. Savin}
\address{Department of Mathematics, Columbia University, New York, NY 10027}\email{\tt  savin@math.columbia.edu}
\begin{abstract} We obtain density estimates for the free boundaries of minimizers $u \ge 0$ of the Alt-Phillips functional involving negative power potentials
 $$\int_\Omega \left(|\nabla u|^2 + u^{-\gamma} \chi_{\{u>0\}}\right) \, dx, \quad \quad \gamma \in (0,2).$$
 These estimates remain uniform as the parameter $\gamma \to 2$. As a consequence we establish the uniform convergence of the corresponding free boundaries to a minimal surface as $\gamma \to 2$.  
 The results are based on the $\Gamma$-convergence of these energies (properly rescaled) to the Dirichlet-perimeter functional 
 $$\int_\Omega |\nabla u|^2 dx + Per_{\Omega}(\{ u=0\}),$$
 considered by Athanasopoulous, Caffarelli, Kenig, and Salsa.
 \end{abstract}

\maketitle

\section{Introduction}

Energy functionals involving the Dirichlet integral of a density $u$ and a potential term $W(u)$
$$ \int_\Omega |\nabla u|^2 + W(u) \, dx,$$
appear in various models in the calculus of variations. A classical example is the Allen-Cahn \cite{AC} energy given by the double-well potential $$W(t)=(1-t^2)^2,$$ which is relevant in the theory of phase-transitions and minimal surfaces. In their celebrated result, Modica and Mortola \cite{MM} showed that $0$-homogenous rescalings of bounded minimizers $|u| \le 1$, converge up to subsequences to a $\pm 1$ configuration separated by a minimal surface, i.e.
\begin{equation}\label{ueps}
u_\eps(x)= u\left( \frac x \eps \right)  \quad \to \quad \chi_E - \chi_{E^c}  \quad \mbox{in $L^1_{loc}$, \quad as $\eps \to 0$,}
\end{equation}
with $E$ a set of minimal perimeter. At the level of the energy, this result is expressed in terms of the Gamma-convergence of the rescaled energies
$$ \int_\Omega \eps |\nabla u|^2 + \frac 1 \eps W(u) \, dx,$$
to a multiple of the perimeter functional $c_0 Per_{\Omega}(E)$.

Other examples of energies appear in the theory of free boundary problems. 
When the potential $W(t)$ is not of class $C^{1,1}$ near a minimum point, say $t=0$, minimizers can develop patches where they take this value. 
The boundary of such a patch $\p\{u=0\}$ is the free boundary. Two particular potentials of interest are given by $$W(t)=t^+,$$ which corresponds to the obstacle problem (for a comprehensive survey see \cite{PSU}), and by $$W(t) = \chi_{\{t>0\}},$$ which corresponds to the Bernoulli free boundary problem (see for example \cite{AC,ACF,CS}). These can be viewed as part of the family of power-potentials
$$W(t)=(t^+)^\beta, \quad \quad \beta\in [0,2),$$
which were considered by Alt and Phillips \cite{AP} in the early 80's.

Recently in \cite{DS}, we investigated properties of non-negative minimizers and their free boundaries for Alt-Phillips potentials of negative powers
$$W(t)=t^{-\gamma} \chi_{\{t>0\}}, \quad \quad \gamma \in (0,2).$$
These potentials are relevant in the applications, for example in liquid models with large cohesive internal forces in regions of low density. 
The upper bound $\gamma <2$ is necessary for the finiteness of the energy.

In \cite{DS} we showed that minimizers $u \ge 0$ of the Alt-Phillips functional involving negative power potentials
 \begin{equation}\label{Ef}
\mathcal E_\gamma(u):=\int_\Omega \left(|\nabla u|^2 + u^{-\gamma} \chi_{\{u>0\}}\right) \, dx, \quad \quad \gamma \in (0,2),
 \end{equation}
have optimal $C^\alpha$ H\"older continuity. The free boundary $$F(u):=\p \{u>0\}$$
is characterized by an expansion of the type
$$ u= c_\alpha d^ \alpha + o(d^{2-\alpha}), \quad \quad \alpha:=\frac{2}{2+\gamma}\quad  \in (\frac 12, 1),$$ 
where $d$ denotes the distance to $F(u)$ and $c_\alpha d^\alpha$ represents the explicit 1D homogenous solution. Furthermore, we showed that $F(u)$ is a hypersurface of class $C^{1,\beta}$ 
up to a closed singular set of dimension at most $n- k(\gamma)$, where $k(\gamma) \ge 3$ is the first dimension in which a nontrivial $\alpha$-homogenous minimizer exists. We also established the Gamma-convergence of a suitable multiple of the $\mathcal E_\gamma$ to the perimeter of the positivity set $Per_\Omega(\{u>0\})$ as $\gamma \to 2$.

In this work we investigate in more detail the properties of minimizers as the parameter $\gamma$ tends to the critical value $2$, 
and make precise the connection between their free boundaries and the theory of minimal surfaces. 
In particular we establish density estimates and the uniform convergence (up to subsequences) of the free boundaries $F(u_k)$ to a minimal surface, for a sequence of bounded minimizers 
$u_k$ corresponding to parameters $\gamma_k \to 2$, see Corollary \ref{C1}. Uniform convergence results in different settings were obtained by Caffarelli and Cordoba \cite{CC} for the Allen-Cahn energy and the convergence in \eqref{ueps}, and by Caffarelli and Valdinoci \cite{CV} for the $s$-nonlocal minimal surfaces with $s \to 1$. We also refer the reader to other related works in similar contexts \cite{ADM,DFPV,PV,SV,V}. 

The constants in the H\"older and density estimates obtained in \cite{DS} degenerate as $\gamma \to 2$. However, here we develop uniform estimates in $\gamma$, and for this it is convenient to rescale the potential term in the functional $\mathcal E_\gamma$ in a suitable way (see \eqref{Jf}). We further establish the Gamma-convergence to the Dirichlet-perimeter functional 
 $$\mathcal F(u):=\int_\Omega |\nabla u|^2 dx + Per_{\Omega}(\{ u=0\}),$$
which was studied by Athanasopoulous, Caffarelli, Kenig, Salsa in \cite{ACKS}. Heuristically, this shows that the cohesive term $W$ has the effect of surface tension as $\gamma \to 2$.

\section{Main results}

Let $\Omega$ be a bounded domain in $\R^n$ with Lipschitz boundary. We consider $J_\gamma$, a rescaling of $\mathcal E_\gamma$, which acts on functions
$$u : \Omega \to \R, \quad \quad u \in H^1(\Omega),  \quad u \ge 0,$$ and it is defined as
\begin{equation}\label{Jf}
J_\gamma (u,\Omega):=\int _\Omega |\nabla u|^2 + W_\gamma(u) \, \, dx, 
\end{equation}
where
\begin{equation}\label{1Dp00}
W_\gamma (u):= c_\gamma u^{-\gamma} \chi_{\{u>0\}}, \quad \quad  \mbox{with} \quad  c_\gamma:= \frac {1}{16} \cdot (2-\gamma)^{2}, \quad \gamma \in (0,2).
\end{equation} 
We study uniform properties of the minimizers of $J_\gamma$ as $\gamma \to 2^-$. We often drop the dependence on $\gamma$ from $J$ and $W$ when there is no possibility of confusion.

Notice that $u$ is a minimizer of $\mathcal E_\gamma$ defined in \eqref{Ef}, if and only if $c(\gamma) u$ is a minimizer of $J_\gamma$, with $c(\gamma)=c_\gamma^\frac{1}{\gamma+2}$ an appropriate constant depending only on $\gamma$, and $c(\gamma) \to 0$ as $\gamma \to 2$.

The constant $c_\gamma$ in \eqref{1Dp00} is chosen such that
\begin{equation}\label{1Dp0}
\int_0^1 2 \sqrt{W_\gamma(s)} \, \, ds =1.
\end{equation}

The homogenous 1D solution $\varphi$ plays an important role in the analysis. It is given by
\begin{equation}\label{1Dp}
 \varphi(t):=c^*_\gamma \, (t^+)^\alpha,
 \end{equation}
with
$$ \alpha:=\frac{2}{2+\gamma}, \quad \quad \quad c_\gamma^*:=\left((1+ \frac \gamma2)^2 \, c_\gamma \right)^\frac{1}{\gamma+2},$$
and satisfies
\begin{equation}\label{1Dp2}
\varphi '= (W_\gamma(\varphi))^{1/2}, \quad \quad \mbox{in} \quad \{\varphi>0\}.
\end{equation}
We differentiate the last equality and obtain that $\varphi$ solves the Euler-Lagrange equation
\begin{equation}\label{1Dp3}
2\varphi'' = W_\gamma'(\varphi) \quad \quad \mbox{in} \quad \{\varphi>0\}.
\end{equation}

\
 
{\it Positive constants depending only on the dimension $n$ are denoted by $c$, $C$, and referred to as universal constants.}

\

The first result is an optimal uniform growth estimate.

\begin{thm}\label{P1}
Let $u$ be a minimizer of $J_\gamma$ in $B_1$ and assume $u(0)=0.$ Then, there exists a universal constant $C$ such that
$$ u(x) \le C |x|^\alpha, \quad \quad \alpha:=\frac{2}{2+\gamma}, \quad \quad \forall \, x \in B_{1/2}.$$
\end{thm}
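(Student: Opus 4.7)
The plan is a contradiction-plus-blow-down argument based on the scale invariance of $J_\gamma$. The rescaling $u(x)\mapsto r^{-\alpha}u(rx)$ maps minimizers on $B_R$ to minimizers on $B_{R/r}$, since the identity $\alpha(2+\gamma)=2$ makes the Dirichlet and potential terms scale identically. Applying this with $r=2|x|$ and comparing at the point $x/r\in\partial B_{1/2}$, the pointwise growth $u(x)\le C|x|^\alpha$ on $B_{1/2}$ reduces to the scale-$1$ statement
\[
\sup_{B_{1/2}} u \le C,
\]
valid for any minimizer $u$ of $J_\gamma$ on $B_1$ with $u(0)=0$, with universal $C$.

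Suppose this scale-$1$ statement fails. Then there are $\gamma_k\in(0,2)$ and minimizers $u_k$ of $J_{\gamma_k}$ on $B_1$ with $u_k(0)=0$ but $M_k := \sup_{y\in B_{1/2}\setminus\{0\}} u_k(y)/|y|^{\alpha_k} \to \infty$. Pick $x_k$ nearly realizing $M_k$ and set
\[
v_k(y) := \frac{u_k(|x_k|\,y)}{M_k\,|x_k|^{\alpha_k}}.
\]
A direct computation using $\alpha_k(2+\gamma_k)=2$ shows that $v_k$ minimizes
\[
\int |\nabla v|^2 + \tilde c_k\,v^{-\gamma_k}\chi_{\{v>0\}}\,dy, \qquad \tilde c_k:=c_{\gamma_k}\,M_k^{-(2+\gamma_k)},
\]
on $B_{1/|x_k|}\supseteq B_2$. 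Since $c_{\gamma_k}\le 1/4$ and $M_k\to\infty$, the coefficient $\tilde c_k\to 0$. By construction $0\le v_k\le 1$ on $B_1$, $v_k(0)=0$, and $v_k(x_k/|x_k|)\to 1$.

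Extracting a subsequential limit $v_k\to v_\infty$, the vanishing of $\tilde c_k$ forces $v_\infty$ to be a nonnegative minimizer of $\int|\nabla v|^2$ on $B_1$ under the obstacle $v\ge 0$. Its boundary trace is nonnegative and approaches $1$ near $\xi_\infty=\lim x_k/|x_k|\in\partial B_1$, so $v_\infty$ coincides with the harmonic extension of this trace, which by the strong maximum principle is strictly positive in the interior of $B_1$---contradicting $v_\infty(0)=0$.

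The main obstacle is the rigorous justification of this limit, in particular preserving both $v_\infty(\xi_\infty)\approx 1$ and $v_\infty(0)=0$ under the weak convergence $v_k\rightharpoonup v_\infty$. Near $\xi_\infty$, the rescaled Euler--Lagrange equation $-2\Delta v_k = \gamma_k\tilde c_k v_k^{-\gamma_k-1}$ has a vanishing right-hand side on $\{v_k\ge 1/2\}$, yielding uniform interior estimates and equicontinuity that transfer the pointwise value to the limit. Preserving the zero at the origin rests on a uniform-in-$\gamma$ positive density estimate for $\{u_k=0\}$ at the free boundary point $0$; this is itself scale-invariant and can be established independently via an energy-comparison argument exploiting the Modica--Mortola-type normalization \eqref{1Dp0}.
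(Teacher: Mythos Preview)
Your approach is genuinely different from the paper's. The paper argues by a direct barrier construction: after the same $\alpha$-homogeneous rescaling you use, it reduces to the situation $B_1\subset\{u>0\}$, $u(x_0)=0$ for some $x_0\in\partial B_1$, and $u(0)\ge M$. Weak Harnack gives $u\ge cM$ on $B_{1/2}$. A carefully built one-dimensional profile $\psi$ (Lemma~\ref{lem1}) yields a radial subsolution $\Psi$ in the annulus $B_1\setminus B_{1/2}$ with $\Psi\le u$, whose expansion at $\partial B_1$ exceeds the model profile $\varphi(d)$ by a positive $\eps\, d^{2-\alpha}$ term; this violates the viscosity free boundary condition at $x_0$. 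There is no compactness step and no appeal to density estimates.

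Your compactness route has a structural gap. You openly rely on a uniform-in-$\gamma$ density estimate for $\{u_k=0\}$ at $0$ in order to transmit $v_k(0)=0$ to $v_\infty(0)=0$. In this paper the density estimates (Theorem~\ref{T0}, Lemmas~\ref{L1}--\ref{L2}) are proved \emph{after} Theorem~\ref{P1} and use it explicitly (the first line of the proof of Lemma~\ref{L1} invokes $u\le C_0$ in $B_2$ from Theorem~\ref{P1}, and Lemma~\ref{L2} does likewise). Your assertion that the density bound ``can be established independently via an energy-comparison argument'' is not substantiated, and the paper's own energy-comparison arguments for density require precisely the growth bound you are trying to prove. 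As written, the argument is circular.

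There is a secondary difficulty even if you supply a density estimate. Your rescaled $v_k$ minimize a functional with coefficient $\tilde c_k\to 0$, not $c_{\gamma_k}$, so the paper's density estimates would not apply to them in any case; and since the Euler--Lagrange equation is singular where $v_k$ is small, you have no a priori modulus of continuity for $v_k$ near its zero set---so extracting a locally uniform limit preserving $v_\infty(0)=0$ (rather than merely $v_\infty\ge 0$) is not automatic from the boundedness $0\le v_k\le 2$ on $B_2$. The paper avoids these issues entirely by the barrier/comparison argument above.
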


The second theorem gives the uniform density estimate of the free boundary.
 
 \begin{thm}[Density estimates]\label{T0} There exists a universal constant $c_0$ such that if $u$ is a nonnegative minimizer of $J_\gamma$ in $B_1$ and $0 \in F(u)$ then
 $$1-c_0 \ge  \frac{|\{u>0\} \cap B_r|}{|B_r|} \ge c_0, \quad \quad \forall \,r \le \frac 1 2.$$

 \end{thm}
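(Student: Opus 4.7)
The plan is to derive both halves of the density estimate from Theorem \ref{P1} combined with a matching non-degeneracy statement from below and a suitable energy-comparison competitor, with $\gamma$-uniformity enforced throughout by the normalization \eqref{1Dp0}.

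For the lower bound $|\{u>0\}\cap B_r|/|B_r|\ge c_0$, I would first establish uniform non-degeneracy: there exists a universal $c_1>0$ such that $\sup_{B_r} u \ge c_1 r^\alpha$ whenever $0\in F(u)$ and $r\le 1/2$. This is proved by comparison with the 1D profile $\varphi$ of \eqref{1Dp}: assuming $\sup_{B_r} u < c_1 r^\alpha$ for $c_1$ small, the competitor obtained by truncating $u$ above the rescaled profile $\varphi(\mathrm{dist}(\,\cdot\,,\partial B_r))$ strictly decreases $J_\gamma$ via the relations \eqref{1Dp2}--\eqref{1Dp3} for $\varphi$; the key point is that the choice $c_\gamma=(2-\gamma)^2/16$ is exactly what makes $c_1$ independent of $\gamma$. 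Given non-degeneracy, choose $x_0\in B_r$ with $u(x_0)\ge c_1 r^\alpha$ and let $y_0\in F(u)$ be the nearest free boundary point. Theorem \ref{P1} translated to $y_0$ gives $u(x_0)\le C|x_0-y_0|^\alpha$, so $\mathrm{dist}(x_0,F(u))\ge c\,r$ for a universal $c$. Consequently $B_{cr}(x_0)\subset \{u>0\}\cap B_{(1+c)r}(0)$, which yields the lower density bound after a harmless adjustment of the radius.

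For the upper bound $|\{u>0\}\cap B_r|/|B_r|\le 1-c_0$, equivalently a universal lower bound on $|\{u=0\}\cap B_r|$, I would compare $u$ against a competitor $v$ that coincides with $u$ outside $B_r$, vanishes on a ball of radius comparable to $r$ inside $B_r$, and transitions between the two along a rescaling of $\varphi$ in a thin annulus. The essential input is that the total $J_\gamma$-energy of this transition layer is at most $Cr^{n-1}$ \emph{uniformly in $\gamma$}, a statement which is precisely the surface-energy calibration encoded in \eqref{1Dp0}, the same mechanism underlying the Modica--Mortola convergence. Combining minimality $J_\gamma(u,B_r)\le J_\gamma(v,B_r)\le Cr^{n-1}$ with the pointwise estimate $u\le Cr^\alpha$ from Theorem \ref{P1} then extracts a universal lower bound on $|\{u=0\}\cap B_r|$.

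The hardest step is the upper density, specifically the $\gamma$-uniform control of the competitor's transition layer as $\gamma\to 2$. Na\"ive truncation-plus-linear-interpolation constructions produce a region where $v$ is small and positive, making $v^{-\gamma}$ blow up, or introduce a Dirichlet term that diverges as $\alpha\to 1/2$; a naive rescaling of $\varphi$ fails because $c_\gamma^*\to 0$ and the profile cannot match the boundary trace of $u$ on $\partial B_r$. The whole point of the rescaling \eqref{1Dp00}--\eqref{1Dp0} is that the 1D profile carries a finite, $\gamma$-independent surface energy of value $1$, so that a transition layer built directly from $\varphi$ (rather than from an ad hoc interpolant) has energy controlled by the $(n{-}1)$-dimensional area of the interface uniformly in $\gamma$. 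This is the same mechanism that drives the $\Gamma$-convergence to the Dirichlet-perimeter functional mentioned in the introduction, and it is what distinguishes the present universal bounds from the $\gamma$-dependent estimates of \cite{DS}.
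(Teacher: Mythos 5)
Your route is genuinely different from the paper's --- the paper derives both halves of the density estimate from discrete differential inequalities for the volume function $a(r)$, obtained by comparing $u$ with calibrated radial 1D barriers, applying the coarea formula and the isoperimetric inequality, and averaging over translates of the barrier (Lemmas \ref{L1} and \ref{L2}) --- but both halves of your proposal have gaps, and they trace to the same misreading of the $\gamma$-scaling.

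For the lower bound, the uniform non-degeneracy $\sup_{B_r} u \ge c_1 r^\alpha$ with $c_1$ universal is in fact \emph{false}. Near a regular free boundary point the minimizer has the expansion $u = c^*_\gamma d^\alpha + o(\cdot)$ (this is the $J_\gamma$-normalization of the expansion quoted from \cite{DS}), so $\sup_{B_r} u \sim c^*_\gamma r^\alpha$ for small $r$, and by \eqref{1Dp}--\eqref{1Dp00} one has $c^*_\gamma \sim (2-\gamma)^{1/2} \to 0$; the one-dimensional solution $\varphi(x_1)=c^*_\gamma(x_1^+)^\alpha$ saturates this. The normalization \eqref{1Dp00}, which you credit with making $c_1$ universal, is exactly what drives $c^*_\gamma \to 0$. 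With the correct non-degeneracy $\sup_{B_r} u \gtrsim c^*_\gamma r^\alpha$ and the upper bound $u \le C|x|^\alpha$ of Theorem \ref{P1} (whose $C$ is universal, \emph{without} a $c^*_\gamma$), your clean-ball radius is only $\gtrsim (c^*_\gamma)^{1/\alpha}\, r$, which vanishes as $\gamma \to 2$. The paper establishes the universal clean ball (Corollary \ref{C0}) \emph{from} the density estimates via an energy-scaling covering argument --- the implication does not run in the direction you need. For the upper bound, the ``extraction'' does not close either: from $u \le Cr^\alpha$ the potential in $\{u>0\}\cap B_r$ is only bounded below by $W(Cr^\alpha) = c_\gamma C^{-\gamma} r^{-\alpha\gamma}$, and since $c_\gamma \to 0$ the estimate $J_\gamma(u,B_r) \le Cr^{n-1}$ yields $|\{u>0\}\cap B_r|/|B_r| \lesssim c_\gamma^{-1} r^{\alpha\gamma-1} \to \infty$, which is vacuous. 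The potential is only of order one where $u \lesssim s_0 \sim 2-\gamma$ (see \eqref{s_0}), far below the scale $r^\alpha$; the measure of the intermediate region where $u$ sits between these two scales --- and where both the potential and the gradient densities are individually small --- is precisely what a crude energy bound cannot control, and is what Lemma \ref{L2} handles by tracking $|\{u\le s_1\}\cap B_r|$ at a level $s_1$ with $W(s_1)=M$ large and running the coarea/isoperimetric/averaging machinery on that set. Your intuition that the calibration \eqref{1Dp0} is the source of $\gamma$-uniformity is sound, but it only becomes effective through the isoperimetric route, not through a direct pointwise comparison against Theorem \ref{P1}.
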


The following result is a direct consequence of Theorems \ref{P1} and \ref{T0}.

\begin{cor}\label{C0}
Let $u$ be a nonnegative minimizer of $J_\gamma$ in $B_1$. If $0 \in F(u)$ then for all $r \in (0, 1/2)$ each of the sets $\{u=0\} \cap B_r$ and $\{u>0\} \cap B_r$ contains an interior ball of radius $c r$. Moreover
$$c r^{n-\alpha \gamma} \le J(u,B_r) \le C r^{n-\alpha \gamma}.$$
\end{cor}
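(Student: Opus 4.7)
The plan is to reduce everything to unit scale via the $\alpha$-homogeneous rescaling $\tilde u(x) := r^{-\alpha} u(rx)$. Using the identity $2\alpha - 2 = -\alpha\gamma$, the function $\tilde u$ is again a minimizer of $J_\gamma$, now on $B_{1/r} \supset B_2$, with $0 \in F(\tilde u)$, and a direct change of variables shows $J(u, B_r) = r^{n-\alpha\gamma}\, J(\tilde u, B_1)$. It therefore suffices to prove that $\{\tilde u = 0\} \cap B_1$ and $\{\tilde u > 0\} \cap B_1$ each contain a universal-radius ball, and that $c \le J(\tilde u, B_1) \le C$ for universal constants.

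The interior balls follow from Theorem \ref{T0} by a standard covering argument: if $\{\tilde u = 0\} \cap B_1$ contained no ball of radius $\varepsilon$, every point of $\{\tilde u = 0\} \cap B_{1/2}$ would lie in the $\varepsilon$-neighborhood of $F(\tilde u)$, whose measure is $\le C\varepsilon$ by the uniform perimeter bound $\mathcal H^{n-1}(F(\tilde u) \cap B_1) \le C$ (standard for $J_\gamma$-minimizers via comparison with a plug competitor, with constant uniform in $\gamma$ thanks to \eqref{1Dp00}); for $\varepsilon$ small this contradicts the lower density $|\{\tilde u = 0\} \cap B_{1/2}| \ge c_0|B_{1/2}|$ from Theorem \ref{T0}. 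The ball inside $\{\tilde u > 0\}$ is obtained identically, exchanging the roles of the two phases. For the upper energy bound, Theorem \ref{P1} gives $\tilde u \le C$ on $B_{3/4}$; Caccioppoli applied to the superharmonic function $\tilde u$ bounds $\int |\nabla \tilde u|^2$, while comparison with the competitor $v := \max\!\big(\tilde u,\, \varphi(\mathrm{dist}(\cdot, \partial B_{3/4}))\big)$, with $\varphi$ the 1D profile \eqref{1Dp}, bounds $\int W_\gamma(\tilde u)$, both uniformly in $\gamma$ by \eqref{1Dp0}.

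The main obstacle is the uniform lower bound $J(\tilde u, B_1) \ge c$: since $c_\gamma \to 0$ as $\gamma \to 2$, any naive pointwise estimate of $\int W_\gamma$ degenerates. The remedy is the Modica--Mortola-type identity
\[
|\nabla \tilde u|^2 + W_\gamma(\tilde u) \;\ge\; 2\sqrt{W_\gamma(\tilde u)}\,|\nabla \tilde u| \;=\; 2|\nabla \Phi(\tilde u)|, \qquad \Phi(t) := \tfrac 12 t^{(2-\gamma)/2},
\]
which replaces the lower bound by a total variation estimate for $\Phi(\tilde u)$. Using the just-established $\int W_\gamma(\tilde u) \le C$ with Chebyshev's inequality, the measure of $\{0 < \tilde u < t\} \cap B_{1/2}$ is at most $C c_\gamma^{-1} t^\gamma$, which selects a universal threshold $t_0 > 0$ such that both $\{\tilde u > t_0\}$ and $\{\tilde u \le t_0\}$ have measure $\ge c$ in $B_{1/2}$. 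The value $\Phi(t_0)$ stays bounded below uniformly as $\gamma \to 2$ precisely because \eqref{1Dp0} was chosen so that $\Phi$ has unit oscillation on the unit interval, independent of $\gamma$. Coarea together with the isoperimetric inequality on the level sets $\{\Phi(\tilde u) > s\}$ for $s \in (0, \Phi(t_0))$ then gives $\int_{B_{1/2}} |\nabla \Phi(\tilde u)| \ge c$, whence $J(\tilde u, B_1) \ge c$.
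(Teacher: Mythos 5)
Your rescaling reduction and your arguments for the two-sided energy bound $c \le J(\tilde u, B_1) \le C$ are correct, though they take routes different from the paper's. For the lower bound, the paper splits into two cases according to whether $|\{\tilde u > s_0\}\cap B_1|$ is large or small, using the potential term in the small case and a Poincar\'e argument in the large case; your Chebyshev choice of threshold $t_0$ (small enough that $|\{0<\tilde u<t_0\}\cap B_{1/2}|$ is controlled, while $\Phi(t_0)$ stays bounded below thanks to the normalization \eqref{1Dp0}) elegantly collapses the two cases into a single coarea plus isoperimetric argument. For the upper bound, the paper reads it off from the barrier built in the density-estimate proof (Remark \ref{R1}), whereas your two-step argument --- Caccioppoli for the Dirichlet part using Theorem \ref{P1}, then comparison with $\max\{\tilde u, \varphi(d)\}$ for the potential part --- also works, the key point being that $\int_0^1 (\varphi')^2\,dt$ stays bounded as $\gamma\to 2$ because $(c_\gamma^*)^2 \sim (2-\gamma)$ cancels the $(2\alpha-1)^{-1}\sim (2-\gamma)^{-1}$ blowup.

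The interior-ball argument, however, has a genuine gap. You pass from the statement ``every point of $\{\tilde u = 0\}\cap B_{1/2}$ lies within $\varepsilon$ of $F(\tilde u)$'' to ``the $\varepsilon$-neighborhood of $F(\tilde u)$ has measure $\le C\varepsilon$,'' justified by a claimed uniform bound $\mathcal H^{n-1}(F(\tilde u)\cap B_1) \le C$. Neither half of this works as written. First, that perimeter bound is not established (and is not needed); the coarea identity only controls $\int_0^{s_0} \mathcal H^{n-1}(\{\tilde u = s\})\,2\sqrt{W(s)}\,ds$, which does not directly bound $\lim_{s\to 0^+}\mathcal H^{n-1}(\{\tilde u = s\})$. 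Second, and more seriously, a bound on $\mathcal H^{n-1}(K)$ does \emph{not} control the Lebesgue measure of the $\varepsilon$-neighborhood of $K$: that requires an upper Minkowski / Ahlfors-regularity estimate, which is essentially the content of the quantity being proved. The paper's covering argument avoids the issue entirely: take a Vitali cover of $\{\tilde u > 0\}\cap B_1$ by disjoint balls $B_\rho(x_i)$ with $\bigcup B_{5\rho}(x_i)$ covering, so that $m\sim\rho^{-n}$ by the density estimate; if every $B_{\rho/2}(x_i)$ met $F(\tilde u)$, the rescaled energy lower bound $J(\tilde u, B_\rho(x_i)) \ge c\rho^{n-\alpha\gamma}$ (which you have already established) would give $J(\tilde u, B_2)\ge m\, c\,\rho^{n-\alpha\gamma}\ge c\rho^{-\alpha\gamma}$, contradicting the uniform upper bound once $\rho$ is small universal. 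You have all the tools to run this argument; the perimeter/Minkowski detour is the step that fails.
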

Next we introduce the Dirichlet-perimeter functional $\mathcal F$ introduced by Athanasopoulous, Caffarelli, Kenig, Salsa in \cite{ACKS}. It acts on the space of admissible pairs $(u,E)$ consisting of functions $u \ge 0$ and measurable sets $E \subset \Omega$ which have the property that $u=0$ a.e. on $E$,
$$ \mathcal A(\Omega):=\{(u,E)| \quad u \in H^1(\Omega),\quad \mbox{$E$ Caccioppoli set, $u \ge 0$ in $\Omega$, $u=0$ a.e. in $E$} \}.$$
The functional $\mathcal F$ is given by the Dirichlet - perimeter energy
$$\mathcal F_\Omega(u,E)= \int_\Omega |\nabla u|^2 dx + P_\Omega(E),$$
 where $P_{\Omega}(E)$ represents the perimeter of $E$ in $\Omega$
 \begin{align*}
 P_\Omega(E)&=\int_\Omega |\nabla \chi_E| \\
 &=\sup \,  \int_\Omega \chi_E \, div \, g \, dx \quad \mbox{with} \quad g \in C_0^\infty(\Omega), \quad |g| \le 1.  
 \end{align*}

The next theorem establishes the $\Gamma$-convergence of the $J_\gamma$'s.

\begin{thm}\label{Tg} As $\gamma \to 2$, the functionals $J_\gamma$ $\Gamma$-converge to $\mathcal F$. 

More precisely we have:

a) (lower semicontinuity) if $\gamma_k \to 2$ and $u_k$ satisfy
  $$ u_k^{1-\gamma_k/2} \to \chi_{E^c} \quad \mbox{in $L^1(\Omega)$}, \quad u_k \to u \quad \mbox{in $L^2(\Omega)$},$$
  then
  $$\liminf J_{\gamma_k}(u_k,\Omega) \ge \mathcal F_{\Omega}(u,E).$$
  
 b)  (approximation) given $(u,E) \in \mathcal A(\Omega)$ with $u$ a continuous in $\overline \Omega$, there exists $\gamma_k \to 2$ and $u_k$ such that
 $$ u_k^{1-\gamma_k/2} \to \chi_{E^c} \quad \mbox{in $L^1(\Omega)$}, \quad u_k \to u \quad \mbox{in $L^2(\Omega)$},$$
 $$ J_{\gamma_k}(u_k,\Omega) \to  \mathcal F_{\Omega}(u,E).$$
\end{thm}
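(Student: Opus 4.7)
The plan is to exploit the Modica--Mortola antiderivative
\begin{equation*}
\Phi_\gamma(t) := \int_0^t 2\sqrt{W_\gamma(s)}\,ds = t^{1-\gamma/2},
\end{equation*}
chosen precisely so that the hypothesis $u_k^{1-\gamma_k/2}\to\chi_{E^c}$ in $L^1$ reads $\Phi_{\gamma_k}(u_k)\to\chi_{E^c}$. The structural difference from the classical Allen--Cahn case is that here the Dirichlet term carries no $\varepsilon$-prefactor, so the proof has to simultaneously extract a \emph{bulk} Dirichlet contribution (away from $\partial E$) and a \emph{surface} contribution localised in a thin transition layer across $\partial E$.

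For the lower semicontinuity (part a), I would pass to a subsequence so that $\nabla u_k\rightharpoonup \nabla u$ weakly in $L^2$ and $u_k\to u$ a.e.\ (automatic once $\liminf J_{\gamma_k}(u_k)<\infty$). For a.e.\ $\delta\in(0,1)$ I split, using $a^2+b^2\ge 2ab$,
\begin{equation*}
J_{\gamma_k}(u_k,\Omega)\ge \int_{\{u_k>\delta\}} |\nabla u_k|^2\,dx + \int_{\{0<u_k\le\delta\}} |\nabla \Phi_{\gamma_k}(u_k)|\,dx.
\end{equation*}
The indicators $\chi_{\{u_k>\delta\}}$ converge to $\chi_{\{u>\delta\}}$ in every $L^p$, so weak lower semicontinuity yields $\liminf_k\int_{\{u_k>\delta\}}|\nabla u_k|^2\ge \int_{\{u>\delta\}}|\nabla u|^2$. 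On the other hand, the elementary estimate
\begin{equation*}
\int_{\{u_k>\delta\}}|\nabla \Phi_{\gamma_k}(u_k)| = \tfrac{2-\gamma_k}{2}\int_{\{u_k>\delta\}} u_k^{-\gamma_k/2}|\nabla u_k|\,dx \le \tfrac{2-\gamma_k}{2}\,\delta^{-\gamma_k/2}\,\|\nabla u_k\|_{L^1(\Omega)} \longrightarrow 0,
\end{equation*}
combined with BV lower semicontinuity applied to $\Phi_{\gamma_k}(u_k)\to \chi_{E^c}$ in $L^1$, gives $\liminf \int_{\{0<u_k\le\delta\}}|\nabla \Phi_{\gamma_k}(u_k)| \ge P_\Omega(E)$. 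Sending $\delta\to 0^+$ and using $\nabla u=0$ a.e.\ on $\{u=0\}$ closes the argument.

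For the recovery sequence (part b), I would first reduce by density in $\mathcal A(\Omega)$ to the case where $\partial E$ is smooth and $u$ is smooth. Let $d$ denote the signed distance to $\partial E$, positive in $E^c$, and pick a ``plateau height'' $\varepsilon_k\to 0$ satisfying simultaneously
\begin{equation*}
\varepsilon_k^{1-\gamma_k/2}\to 1 \qquad\text{and}\qquad c_{\gamma_k}\varepsilon_k^{-\gamma_k}\to 0,
\end{equation*}
for instance $\varepsilon_k = 1/|\log(2-\gamma_k)|$. Let $\rho_k$ be such that $\varphi_{\gamma_k}(\rho_k)=\varepsilon_k$, and set
\begin{equation*}
\psi_k(x):=\begin{cases}0, & d(x)\le 0,\\ \min\bigl(\varphi_{\gamma_k}(d(x)),\varepsilon_k\bigr),& d(x)>0,\end{cases}\qquad u_k:=\max(u,\psi_k).
\end{equation*}
Then $u_k\equiv 0$ on $E$, $u_k\ge \varepsilon_k$ on $E^c$, and $|u_k-u|\le \varepsilon_k$ uniformly, so the two required convergences hold by dominated convergence. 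Split the energy over $\{d\le 0\}$, the strip $\{0<d\le\rho_k\}$, and $\{d>\rho_k\}$. The first vanishes. On the strip, $u_k\ge\varphi_{\gamma_k}(d)$ gives $W_{\gamma_k}(u_k)\le W_{\gamma_k}(\varphi_{\gamma_k}(d))$, while Lipschitz continuity of $u$ with $u=0$ on $\partial E$ forces $u\le \varphi_{\gamma_k}(d)$ on the strip for large $k$, so $u_k=\varphi_{\gamma_k}(d)$ there; the coarea formula together with the identity $(\varphi')^2+W_\gamma(\varphi)=2(\varphi')^2$ yields
\begin{equation*}
\int_{\{0<d\le\rho_k\}}\bigl(|\nabla u_k|^2+W_{\gamma_k}(u_k)\bigr)\,dx \;\sim\; P_\Omega(E)\cdot \Phi_{\gamma_k}(\varepsilon_k) \;\longrightarrow\; P_\Omega(E).
\end{equation*}
On $\{d>\rho_k\}$, $|\nabla u_k|^2=|\nabla u|^2\chi_{\{u>\varepsilon_k\}}$ integrates to $\int_\Omega|\nabla u|^2+o(1)$, while the potential is dominated by $c_{\gamma_k}\varepsilon_k^{-\gamma_k}|\Omega|\to 0$. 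Summing gives $\limsup J_{\gamma_k}(u_k)\le \mathcal F_\Omega(u,E)$, and part (a) furnishes the matching liminf.

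The main obstacle is constructing the recovery sequence on the region $\{u=0\}\cap E^c$: the $L^2$ convergence $u_k\to u$ forces $u_k$ toward $0$ there, while $u_k^{1-\gamma_k/2}\to 1$ forbids $u_k$ from decaying too fast. The plateau height $\varepsilon_k$ and the twin requirements $\varepsilon_k^{1-\gamma_k/2}\to 1$ and $c_{\gamma_k}\varepsilon_k^{-\gamma_k}\to 0$ are precisely what balance these competing constraints, and exhibiting such $\varepsilon_k$ (via the logarithmic choice above) is the key technical input.
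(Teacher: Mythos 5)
Part (a) is essentially the paper's argument (split at the level $\{u_k \le \delta\}$, coarea for the transition, weak $L^2$ lower semicontinuity for the bulk), with only cosmetic differences.

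Part (b) has a genuine gap in the key claim that ``Lipschitz continuity of $u$ with $u=0$ on $\partial E$ forces $u\le \varphi_{\gamma_k}(d)$ on the strip for large $k$.'' The problem is that $\varphi_{\gamma_k}(t)=c^*_{\gamma_k}t^{\alpha_k}$ with $c^*_{\gamma_k}\sim (2-\gamma_k)^{1/2}\to 0$, so $\varphi_{\gamma_k}\to 0$ uniformly on compacts. For a Lipschitz $u$ with $u\le L\,d$, the inequality $L\,d\le c^*_{\gamma_k}d^{\alpha_k}$ holds only for $d\lesssim (c^*_{\gamma_k}/L)^{1/(1-\alpha_k)}\sim (2-\gamma_k)$. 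On the other hand, solving $\varphi_{\gamma_k}(\rho_k)=\varepsilon_k$ gives $\rho_k\sim\varepsilon_k^2/(2-\gamma_k)$, and the requirement $c_{\gamma_k}\varepsilon_k^{-\gamma_k}\to 0$ forces $\varepsilon_k\gg 2-\gamma_k$, hence $\rho_k\gg 2-\gamma_k$; with your choice $\varepsilon_k=1/|\log(2-\gamma_k)|$ one even gets $\rho_k\to\infty$, so the ``strip'' eventually swallows all of $E^c\cap\Omega$. In that regime the outer region $\{d>\rho_k\}$ is empty, and since your strip estimate records only $\int 2(\varphi'_{\gamma_k})^2\to P_\Omega(E)$, the Dirichlet term $\int_\Omega|\nabla u|^2$ has simply disappeared from the $\limsup$ bound --- contradicting the $\liminf$ bound of part (a) unless $u\equiv 0$. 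The two constraints you single out as the ``key technical input'' are therefore incompatible with the third (unstated) constraint that the layer be thin compared with the region where $\varphi_{\gamma_k}(d)$ dominates $u$. The paper avoids this entirely: after Modica's smooth approximation of $E$ by $F$ with $\mathcal H^{n-1}(\partial F\cap\partial\Omega)=0$, it replaces $u$ by $\tilde u=(u-2\eps)^+$, which vanishes identically in a \emph{fixed} $\delta$-neighborhood of $\tilde E$, and then takes the uncapped $u_k=\max\{\varphi_{\gamma_k}(d),\tilde u\}$. No plateau height $\varepsilon_k$ is needed: on $\{d<\delta\}$ one has $u_k=\varphi_{\gamma_k}(d)$ by construction, while on $\{d>\delta\}$ the inequality $u_k\ge\varphi_{\gamma_k}(\delta)$ together with $W_{\gamma_k}(\varphi_{\gamma_k}(\delta))\to 0$ for fixed $\delta$ kills the potential. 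Your construction could in fact be salvaged by dropping the false identity $u_k=\varphi_{\gamma_k}(d)$ and instead bounding $|\nabla u_k|^2+W(u_k)\le |\nabla u|^2+2(\varphi'_{\gamma_k}(d))^2$ a.e.\ on the strip, but as written the bookkeeping does not close.
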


Our main result gives the strong convergence of the minimizers of $J_\gamma$ and their zero set to the minimizing pairs $(u,E)$ of $\mathcal F$. 
 \begin{thm}\label{TM}
 Let $\Omega$ be a bounded domain with Lipschitz boundary, $\gamma_k \to 2^-$, and $u_k$ a sequence of functions with uniform bounded energies
 $$ \|u_k\|_{L^2(\Omega)} + J_{\gamma_k}(u_k,\Omega) \le M,$$
 for some $M>0$. Then, after passing to a subsequence, we can find $(u,E) \in \mathcal A(\Omega) $ such that
   $$ u_k^{1-\gamma_k/2} \to \chi_{E^c} \quad \mbox{in $L^1(\Omega)$}, \quad u_k \to u \quad \mbox{in $L^2(\Omega)$},$$
   and
 $$  \chi_{\{u_k>0\}} \to \chi_{E^c} \quad \mbox{in $L^1(\Omega)$}.$$
   
  Moreover, if $u_k$ are minimizers of $J_{\gamma_k}$ then the limit $(u,E)$ is a minimizer of $\mathcal F$. The convergence of $u_k$ to $u$ and respectively of the free boundaries $\p \{u_k>0\}$ to $\p E$ is uniform on compact sets  (in the Hausdorff distance sense). 
 \end{thm}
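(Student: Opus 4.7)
The plan is to combine BV/Rellich compactness with the $\Gamma$-convergence of Theorem \ref{Tg} and the uniform estimates of Theorems \ref{P1} and \ref{T0}. First I would set up compactness of $u_k$ and of
\[
v_k := u_k^{1 - \gamma_k/2}.
\]
The normalisation $c_\gamma = (2-\gamma)^2/16$ was chosen precisely so that AM--GM yields the pointwise identity $2\sqrt{W_{\gamma_k}(u_k)}\,|\nabla u_k| = |\nabla v_k|$, hence $\int_\Omega |\nabla v_k|\,dx \le J_{\gamma_k}(u_k,\Omega) \le M$. Together with the trivial bound $v_k \le 1 + u_k$ and the $H^1$ bound on $u_k$ coming from the hypothesis and the Dirichlet part of $J_{\gamma_k}$, Rellich and BV compactness deliver a subsequence along which $u_k \to u$ in $L^2$ and a.e., and $v_k \to v$ in $L^1$ and a.e. On $\{u>0\}$ one has $v_k \to 1$ pointwise (continuity of $t\mapsto t^s$ at $s=0$), so $v \equiv 1$ there; on $\{u=0\}$ the inequality $v_k \le 1 + u_k$ forces $v \le 1$.

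To force $v \in \{0,1\}$ a.e.\ on $\{u=0\}$, I would exploit the quantitative trapping inherent in the potential term. Using $u_k = v_k^{2/(2-\gamma_k)}$ on $\{v_k > 0\}$ gives
\[
W_{\gamma_k}(u_k) = \tfrac{(2-\gamma_k)^2}{16}\, v_k^{-\tfrac{2\gamma_k}{2-\gamma_k}},
\]
which on the set $\{v_k \in (0, 1-\delta)\}$ is bounded below by $\tfrac{(2-\gamma_k)^2}{16}\exp(c_\delta/(2-\gamma_k)) \to \infty$. The bound $\int W_{\gamma_k}(u_k) \le M$ thus forces $|\{v_k \in (0, 1-\delta)\}| \to 0$ for every $\delta > 0$, and hence $v = \chi_{E^c}$ with $E := \{v=0\} \subseteq \{u=0\}$, so $(u,E) \in \mathcal A(\Omega)$. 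The same vanishing of the intermediate sublevels gives $\chi_{\{u_k > 0\}} = \chi_{\{v_k > 0\}} \to \chi_{E^c}$ in $L^1$: $\{v_k > 0\}$ differs from $\{v_k \ge 1-\epsilon\}$ by a set of vanishing measure, while the $L^1$ convergence $v_k \to \chi_{E^c}$ controls $|\{v_k \ge 1-\epsilon\} \triangle E^c|$.

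For minimizers, minimality of the limit $(u,E)$ is a standard $\Gamma$-convergence argument. Given a local competitor $(\tilde u, \tilde E) \in \mathcal A(\Omega)$ differing from $(u,E)$ only on a compact subset of some $\Omega' \subset\subset \Omega$, apply the recovery part (b) of Theorem \ref{Tg} on $\Omega'$ to obtain $\tilde u_k$ with $J_{\gamma_k}(\tilde u_k, \Omega') \to \mathcal F_{\Omega'}(\tilde u, \tilde E)$, and glue $\tilde u_k$ to $u_k$ across a thin buffer near $\partial \Omega'$ at asymptotically negligible energy cost (using the uniform energy bound and the $L^1$ convergence of both $u_k$ and $\tilde u_k$ to $u$ in the buffer). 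Minimality of $u_k$ together with the lower semicontinuity part (a) of Theorem \ref{Tg} then yields $\mathcal F_{\Omega'}(u,E) \le \mathcal F_{\Omega'}(\tilde u, \tilde E)$.

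Finally, uniform convergence on compacts is derived from the uniform estimates. For the Hausdorff convergence of $\partial \{u_k > 0\}$ to $\partial E$: Theorem \ref{T0} implies that any limit $x_\infty$ of free boundary points $x_k \in \partial \{u_k > 0\}$ inherits two-sided density $c_0 r^n$ for both $E$ and $E^c$ via the $L^1$ convergence of $\chi_{\{u_k>0\}}$, and so lies in $\partial E$; conversely any $x \in \partial E$ is approached by free-boundary points, because otherwise a neighbourhood of $x$ contained in $\{u_k > 0\}$ or in $\{u_k = 0\}$ would conflict with the $L^1$ convergence. For uniform convergence $u_k \to u$: well inside $\{u>0\}$, density gives $u_k \ge c > 0$ locally, so the Euler--Lagrange equation $2\Delta u_k = -\gamma_k c_{\gamma_k} u_k^{-\gamma_k-1}$ has a bounded right-hand side and interior elliptic regularity yields smooth convergence; well inside $E$, density forces $u_k \equiv 0$ for large $k$; in a thin neighbourhood of $\partial E$, Theorem \ref{P1} bounds $u_k$ by $C\,d(\cdot, \partial \{u_k>0\})^{\alpha_k}$, which is uniformly small as $\alpha_k \to 1$. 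The most delicate step is the quantitative confinement of $v_k$ to $\{0,1\}$: one must extract from the potential bound the correct exponential-in-$1/(2-\gamma_k)$ scaling, after which the minimality and uniform convergence follow from the $\Gamma$-convergence and the uniform estimates of Theorems \ref{P1} and \ref{T0}.
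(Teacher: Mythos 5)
Your compactness step and the identification of the limit $g = \chi_{E^c}$ follow essentially the same route as the paper's proof: the coarea identity $|\nabla v_k| = 2\sqrt{W_{\gamma_k}(u_k)}\,|\nabla u_k|$ with $v_k = u_k^{1-\gamma_k/2}$ gives the BV bound, Rellich and BV compactness extract the subsequences, and the exponential blow-up of $W_{\gamma_k}$ on intermediate level sets $\{v_k \in (0,1-\delta)\}$ rules out intermediate values of $g$ (the paper uses $u_k^{1-\gamma_k/2} \le 1 + u_k^2$ rather than $v_k \le 1 + u_k$, but both yield the needed $L^1$ bound). The conclusion $\chi_{\{u_k>0\}} \to \chi_{E^c}$ and the discussion of uniform convergence of $u_k$ and of the free boundaries also match the paper in spirit, and your elaboration of the Hausdorff-convergence argument via the density estimates is sound.

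The genuine gap is in the minimality transfer, specifically the sentence asserting one can ``glue $\tilde u_k$ to $u_k$ across a thin buffer near $\partial\Omega'$ at asymptotically negligible energy cost (using the uniform energy bound and the $L^1$ convergence...).'' For this functional that gluing is not routine and is in fact the main technical content of the paper's Section 5, namely Proposition \ref{P2}. The obstruction is the singular potential $W_\gamma(u) = c_\gamma u^{-\gamma}\chi_{\{u>0\}}$: a naive linear interpolation $w = \eta\,\tilde u_k + (1-\eta)\,u_k$ in the buffer can create small positive values wherever one function vanishes and the other does not, producing a potential contribution that blows up as $\gamma_k \to 2$, so the ``asymptotically negligible energy cost'' does not follow from the uniform energy bound and $L^1$ closeness alone. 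The paper circumvents this by interpolating through the truncated one-dimensional profile $\min\{u_k,\max\{\psi_r,\tilde u_k\}\}$, exploiting the identity $|\nabla\psi_r| = \sqrt{W(\psi_r)}$ so that the interpolant's energy in the transition region reduces to a coarea integral controllable by $\|u_k^{1-\gamma_k/2}-\tilde u_k^{1-\gamma_k/2}\|_{L^1}$ after averaging over the radial shift $r$, and only then applies a smooth cutoff between two functions that are already ordered and close in $L^2$. Your outline also leans on Theorem \ref{Tg}(b) to produce the recovery sequence $\tilde u_k$, which as stated requires the competitor $\tilde u$ to be continuous up to the boundary; strictly one should reduce to continuous competitors by approximation (the paper implicitly does this), and that reduction deserves a sentence. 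Everything else is essentially correct; without an interpolation lemma of Proposition \ref{P2}'s strength, however, the minimality conclusion does not follow.
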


As a consequence we obtain the connection between bounded minimizers of $\mathcal E_\gamma$ with $\gamma \to 2$ and minimal surfaces, as stated in the Introduction. The uniform boundedness of minimizers can be deduced for example from a uniform bound of the boundary data on $\p \Omega$.

\begin{cor}\label{C1}
Assume that $u_k$ are uniformly bounded minimizers of $\mathcal E_{\gamma_k}$ defined in \eqref{Ef}, and $\gamma_k \to 2$. Then, up to subsequences, $F(u_k)$ converge uniformly on compact sets to a minimal surface $\p E$. 
\end{cor}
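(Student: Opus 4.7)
The plan is to reduce Corollary \ref{C1} to Theorem \ref{TM} via the rescaling that converts minimizers of $\mathcal E_\gamma$ into minimizers of $J_\gamma$. I would set $v_k := c(\gamma_k) u_k$ with $c(\gamma_k) = c_{\gamma_k}^{1/(\gamma_k+2)} \to 0$, as noted after \eqref{Jf}, so that $v_k$ is a minimizer of $J_{\gamma_k}$. Two simplifications are then immediate: the positivity sets coincide, $\{v_k > 0\} = \{u_k > 0\}$, so $F(u_k) = \partial\{v_k>0\}$; and $\|v_k\|_{L^\infty(\Omega)} \le c(\gamma_k)\, M \to 0$, which both gives the $L^2$ hypothesis of Theorem \ref{TM} for free and forces any $L^2$-limit $v$ to vanish identically.

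The remaining hypothesis to verify is a uniform bound $J_{\gamma_k}(v_k, \Omega') \le C$ on compactly contained subdomains $\Omega' \subset\subset \Omega$; an exhaustion of $\Omega$ and a diagonal extraction will then transfer the conclusion to all of $\Omega$. This is the main obstacle I anticipate. The crucial observation is that the coupling constant $c_{\gamma_k} = \tfrac{1}{16}(2-\gamma_k)^2$ tends to $0$, so that $c_{\gamma_k}\, \varepsilon^{-\gamma_k} \to 0$ for any fixed $\varepsilon > 0$. I would therefore test the minimality of $v_k$ against a competitor equal to $v_k$ outside a thin collar of $\partial \Omega'$ and equal to $\varepsilon$ inside $\Omega'$, with smooth interpolation in the collar whose Dirichlet energy is controlled by the uniform H\"older bound from Theorem \ref{P1}. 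The potential contribution of the competitor in the interior is then at most $c_{\gamma_k}\, \varepsilon^{-\gamma_k} |\Omega'| = o(1)$, and the energy bound follows; the irregularity of $\{v_k=0\} \cap \partial \Omega'$ is the delicate point, but it is absorbed by the smallness of $c_{\gamma_k}$.

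With the hypotheses in place, Theorem \ref{TM} provides, along a subsequence, a pair $(v, E) \in \mathcal A(\Omega)$ with $v_k \to v$ in $L^2$, $\chi_{\{v_k > 0\}} \to \chi_{E^c}$ in $L^1$, uniform Hausdorff convergence of the free boundaries $\partial\{v_k > 0\}$ to $\partial E$ on compact sets, and $(v, E)$ a minimizer of $\mathcal F$. Uniform convergence $v_k \to 0$ forces $v \equiv 0$, so the admissibility condition $v = 0$ a.e.\ on $E$ becomes vacuous and every Caccioppoli set $E' \subset \Omega$ yields an admissible pair $(0, E')$. Testing the minimality of $(0, E)$ against $(0, E')$ for $E' \triangle E \subset\subset \Omega$ gives $P_\Omega(E) \le P_\Omega(E')$, i.e.\ $E$ is a local perimeter minimizer in $\Omega$ and $\partial E$ is a minimal surface. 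Combined with the Hausdorff convergence of $F(u_k) = \partial\{v_k > 0\}$ to $\partial E$, this is exactly the statement of the corollary.
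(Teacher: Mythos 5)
Your reduction is exactly the paper's two-sentence proof: set $v_k := c(\gamma_k)u_k$, observe $v_k$ minimizes $J_{\gamma_k}$ with $\{v_k>0\}=\{u_k>0\}$ and $\|v_k\|_{L^\infty}\le c(\gamma_k)M\to 0$, feed this into Theorem \ref{TM} to get a limit pair $(0,E)$ minimizing $\mathcal F$, and conclude that $E$ is a local perimeter minimizer because the admissibility constraint $u=0$ a.e.\ on $E$ is vacuous when $u\equiv 0$. Your last step, testing $(0,E)$ against $(0,E')$, is a correct and useful expansion of the paper's sentence ``$E$ must be a set of minimal perimeter.'' You are also right to notice that Theorem \ref{TM} requires a uniform energy bound which the paper's terse proof does not supply, and your plan to establish it locally by a competitor argument and then diagonalize over an exhaustion is sound in outline.

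The gap is in the energy-bound step itself. If the cutoff $\eta$ in your competitor $w=\max(v_k,\varepsilon\eta)$ is Lipschitz (a ``smooth interpolation''), the potential contribution on the set $\{v_k<\varepsilon\eta\}$ in the collar scales like
$$c_{\gamma_k}\varepsilon^{-\gamma_k}\int_0^1 s^{-\gamma_k}\,ds$$
in the transversal variable, and $\int_0^1 s^{-\gamma_k}\,ds=+\infty$ as soon as $\gamma_k\ge 1$; the prefactor $c_{\gamma_k}\to 0$ cannot repair a divergent integral, so ``absorbed by the smallness of $c_{\gamma_k}$'' is not correct as stated. Replacing the cutoff by a pure $\alpha_k$-power profile $\eta=(d/\delta)^{\alpha_k}$ with fixed amplitude $\varepsilon$ fixes the potential but breaks the Dirichlet part: one computes
$$\int_{\text{collar}}\varepsilon^2|\nabla\eta|^2\,dx \sim \frac{\varepsilon^2\alpha_k^2}{\delta(2\alpha_k-1)} \longrightarrow \infty$$
since $\alpha_k\to 1/2$. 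What makes the construction work is the one-dimensional solution $\varphi_k$ of \eqref{1Dp}, which has the small coefficient $c^*_{\gamma_k}\to 0$ built in: by $\varphi_k'^2=W(\varphi_k)$ and the normalization \eqref{1Dp0},
$$\int_0^{\delta_1}\big(\varphi_k'(t)^2+W(\varphi_k(t))\big)\,dt=\int_0^{\varphi_k(\delta_1)}2\sqrt{W(s)}\,ds\le 1,$$
uniformly in $k$. The correct competitor thus follows $\varphi_k(d)$ across the zero level (exactly as in the recovery construction of Lemma \ref{L4}), then ramps linearly with slope $O(\varepsilon/\delta)$ from $\varphi_k(\delta_1)$ up to $\varepsilon$, where both the Dirichlet density and $W$ stay bounded and the potential contribution is $o(1)$ by $W(\varphi_k(\delta_1))=\varphi_k'(\delta_1)^2\to 0$. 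Incorporating this profile closes the gap; the rest of your argument then goes through unchanged.
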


Indeed, $c(\gamma_k) u_k$ is a minimizer for $J_{\gamma_k}$ and, since $c(\gamma_k) \to 0$, the limiting function $u$ of Theorem \ref{TM} is identically $0$. This means that the limiting set $E$ must be a set of minimal perimeter in $\Omega$.

The paper is organized as follows. In Section 3 we prove the uniform growth estimate Theorem \ref{P1} and in Section 4 we obtain the uniform density estimates. In the last section we prove the main result Theorem \ref{TM}. 

\section{Proof of Theorem \ref{P1}}

In this section we prove Theorem \ref{P1}. We state it here again for the reader convenience. We remark that this statement was proved in \cite{DS}
 with a constant $C$ depending on $\gamma$. The purpose of this section is to show that in fact the statement holds with a universal constant $C$. In the proof, we use that minimizers are viscosity solution in the sense of Definition 4.1 of \cite{DS}, as showed in Proposition 4.4 of \cite{DS}.
 
 \begin{thm}
Let $u$ be a minimizer of $J_\gamma$ in $B_1$, and assume $u(0)=0.$ Then 
$$ u(x) \le C |x|^\alpha, \quad \quad \quad \forall \, x \in B_{1/2},$$
with $C$ universal.
\end{thm}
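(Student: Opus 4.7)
The plan is to upgrade the $\gamma$-dependent growth estimate of \cite{DS} to a universal one. The essential point is to trace every constant and ensure its dependence on $\gamma$ enters only through the normalization \eqref{1Dp0}, i.e.\ $\int_0^1 2\sqrt{W_\gamma}\,ds = 1$. Since $J_\gamma$ is invariant under the rescaling $v(x):=r^{-\alpha}u(rx)$, the desired bound $u(x)\le C|x|^\alpha$ on $B_{1/2}$ reduces to a uniform estimate $\sup_{B_{1/2}}u\le C$ for a minimizer of $J_\gamma$ on $B_1$ with $u(0)=0$. I would target this via the following doubling dichotomy: there exist universal constants $K_0\ge 1$, $\theta\in(0,1)$ and $\eta>0$ with $\theta^\alpha(1+\eta)>1$, such that if $\sup_{B_r}u\ge Kr^\alpha$ for some $K\ge K_0$ and $r\le 1/2$, then $\sup_{B_{\theta r}}u\ge K(1+\eta)(\theta r)^\alpha$. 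Iterating this would force $\sup_{B_{\theta^k r}}u\to\infty$ geometrically, while continuity forces $\sup_{B_{\theta^k r}}u\to u(0)=0$; contradiction.

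The dichotomy itself I would establish by comparing $u$ with translates of the one-dimensional profile $\varphi$. Fix a direction $e$ realizing (nearly) $\sup_{B_r}u$ and consider the one-parameter family $\Phi_{e,t}(x):=\varphi((x\cdot e+t)^+)$; each $\Phi_{e,t}$ is itself a minimizer of $J_\gamma$ in its positivity set. A sliding argument — adjusting $t$ to produce a critical contact between $\Phi_{e,t}$ and $u$ on $\overline{B_r}$ — together with viscosity comparison (using that $u$ is a viscosity solution by Proposition 4.4 of \cite{DS}) and the free boundary expansion $u=c_\gamma^{\ast}d^\alpha+o(d^\alpha)$ from \cite{DS}, either forces $u\equiv\Phi_{e,t^{\ast}}$ on a half-ball (inconsistent with $u(0)=0$ combined with the largeness of $\sup_{B_r}u$) or pins the contact to $\partial B_r$. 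In the latter case, the location and value of $\Phi_{e,t^{\ast}}$ at the contact give a lower bound on $u$ on a definite portion of $\partial B_{\theta r}$; combined with the superharmonicity $\Delta u=-\tfrac{\gamma c_\gamma}{2}u^{-\gamma-1}$ in $\{u>0\}$ and a mean-value step, this propagates to the improved lower bound $\sup_{B_{\theta r}}u\ge K(1+\eta)(\theta r)^\alpha$.

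The main obstacle, and where this argument must depart from that in \cite{DS}, is maintaining uniformity in $\gamma$ as $\gamma\to 2$. In that limit the 1D profile degenerates pointwise: $c_\gamma^{\ast}\to 0$ and $\alpha\to 1/2$, so $\varphi$ becomes small on every fixed ball and the barriers $\Phi_{e,t}$ appear to lose force. What the normalization $c_\gamma=(2-\gamma)^2/16$ preserves is the transverse energy $\int_0^1 2\sqrt{W_\gamma}\,ds=1$, which controls the "energy flux" of any translate of $\varphi$ independently of $\gamma$. The delicate bookkeeping step is to phrase every comparison in the sliding argument — the touching criterion, the mean-value propagation, the ruling out of interior contact — purely through this conserved integral rather than through the pointwise slope or value of $\varphi$. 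Once this is done the constants $K_0$, $\theta$, $\eta$ remain universal, and Theorem \ref{P1} follows.
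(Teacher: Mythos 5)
Your scaling reduction is correct and the iteration logic (the doubling dichotomy leading to $\sup_{B_{\theta^k r}} u \to \infty$ versus $\to u(0)=0$) is sound \emph{if} the dichotomy holds. But the proposed mechanism for proving the dichotomy has a genuine gap, and the paper avoids exactly the difficulty you flag but do not resolve.

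The sliding argument with pure translates $\Phi_{e,t}(x)=\varphi((x\cdot e+t)^+)$ cannot close for two reasons. First, each $\Phi_{e,t}$ is an \emph{exact} solution, not a strict subsolution; a touching point (interior or on the free boundary) therefore forces equality by the strong maximum principle or yields matching leading-order expansions $c^*_\gamma d^\alpha + o(d^\alpha)$ for both $u$ and $\Phi$, which is not a contradiction. To turn comparison into information, one needs a \emph{strict} subsolution, and this is exactly what the paper engineers: Lemma \ref{lem1} produces a 1D barrier $\psi$ whose expansion near its zero is $\varphi(t) + \eps\, t^{2-\alpha} + O(t^{2-\alpha+\delta})$, i.e.\ the barrier lies strictly above $\varphi$ to leading corrective order, so a minimizer touching it at a free boundary point violates the viscosity characterization (Definition 4.1 / Proposition 4.4 of \cite{DS}). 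Second — the point you correctly identify as the crux — since $c^*_\gamma\to 0$, the profile $\varphi$ is uniformly tiny on fixed balls, so unmodified translates cannot probe the large values of $u$ that a bound $\sup u \ge Kr^\alpha$ with $K\gg 1$ describes. Your remedy ("phrase the comparison through the conserved integral $\int_0^1 2\sqrt{W_\gamma}=1$") is the right intuition but is left entirely unworked: it gives no concrete barrier. The paper makes this precise via the first-order ODE reduction $g(\psi)=(\psi')^2$, choosing $g = W + \bar\eps + 8n\,s^{1-\gamma/2}$. The extra term $8n\,s^{1-\gamma/2}$ is comparable to $W$ near $s_0 \sim 2-\gamma$ and ensures a \emph{universal} exit slope $\psi'(t_0)\le C_0$, which is what lets the barrier be glued to a harmonic function in the inner annulus matching the large boundary value $cM$ obtained from the weak Harnack inequality. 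Without that explicit construction, the uniformity in $\gamma$ is not achieved, and the dichotomy is not proved. In short: the framing is salvageable, but the heart of the argument — a barrier family with a strict subsolution correction and a $\gamma$-independent slope at a definite height — is missing from the proposal, and it is precisely the content of Lemma \ref{lem1} and its ODE construction.
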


\begin{proof} Minimizers of $J$ are invariant under $\alpha$-homogenous rescalings
$$ \tilde u(x)=\frac{ u(y_0+\lambda x)}{\lambda^\alpha}.$$
After such a rescaling, we may assume that we are in the situation $B_1\subset \{u>0\}$ and $u$ vanishes at some point $x_0 \in \p B_1$. We need to prove that $u(0)$ is bounded above by a large universal constant.

Notice that in $B_1$ we satisfy
$$ \triangle u \le 0  , \quad \quad \triangle \, (u-1)^+ \ge -1.$$
Thus, if $$u(0) \ge M \gg 1,$$ then by the weak Harnack inequality we find 
$$ u \ge c \, M \quad \mbox{in} \quad B_{1/2}, \quad \mbox{with $c>0$ universal.}$$

\begin{lem}\label{lem1} There exists a one dimensional increasing function $\psi$, 
$$\psi :[0,t_0] \to R, \quad \quad \psi(0)=0, \quad t_0 \le \frac 1 4,$$ such that (see \eqref{1Dp} for the definition of $\varphi$)

1) $$\psi (t)= \varphi(t) + \eps \, t^{2-\alpha} + O(t^{2-\alpha+\delta}) \quad \quad \mbox{near $0$,}\quad \delta>0,$$

2) $$ 2 \psi'' \ge 4 n \psi' + W' (\psi),$$

3) $$\psi(t_0) \le 1 , \quad \quad \psi'(t_0) \le C_0 \quad \quad \mbox{universal}.$$

\end{lem}

\

Using Lemma \ref{lem1} we construct  a barrier $\Psi : B_1 \setminus B_{1/2} \to \R$, as 
$$\Psi (x)= \psi(1-|x|) \quad \mbox{in} \quad  B_1 \setminus B_{ 1-t_0}$$ and 
$$ \triangle \Psi=0 \quad \mbox{in } \quad B_{ 1-t_0} \setminus B_{1/2},$$ 
with boundary conditions
$$\Psi=cM \quad \mbox{on $\p B_{1/2}$}, \quad  \Psi=\psi(t_0) \quad \mbox{on $\p B_{1-t_0}$}.$$
Since $\psi(t_0) \le 1$, it follows that 
$$\mbox{$|\nabla \Psi| >C_0$ in the annulus $B_{ 1-t_0} \setminus B_{1/2}$,}$$
provided that $M$ is large universal.

We claim that
\begin{equation}\label{1000}
2\triangle \Psi \ge W'(\Psi), \quad \mbox{in} \quad B_1 \setminus B_{1/2}.
\end{equation} 


The inequality is satisfied in the outer annulus $B_1 \setminus B_{1-t_0}$ by property 2) above, and in the inner annulus 
$B_{ 1-t_0} \setminus B_{1/2}$ since $0 > W'$. 

Moreover, the inequalities between the normal derivatives on either side of $\p B_{ 1-t_0} $ guarantee that \eqref{1000} holds in the whole domain. 

Since $W'(t)$ is increasing for $t>0$, we can apply the maximum principle and conclude that 
$$u \ge \Psi \quad \mbox{in} \quad B_1 \setminus B_{1/2}.$$ We contradict the free boundary condition at the point $x_0 \in F(u)$ for a minimizer, see Proposition 4.4 in \cite{DS}. Indeed, property 1) above shows that $\Psi-\varphi(d)$ has a positive correction term $\eps \, d^{2-\alpha}$ in the expansion near its free boundary and therefore it is a strict viscosity subsolution on $\p B_1$, see Definition 4.1 in \cite{DS}. 
 \end{proof}

It remains to prove the lemma above.

\

{\it Proof of Lemma \ref{lem1}:} We reduce the second order ODE to a 1st order ODE by taking $\psi$ as an independent variable. More precisely, with a strictly increasing function $\psi$ we associate the function $g>0$ defined on the range of $\psi$ as
\begin{equation}\label{gred}
 g(\psi):=(\psi')^2.
 \end{equation}
After differentiation we obtain 
$$2 \psi'' = g'(\psi).$$ The function $\psi$ can be recovered from $g$ by the formula
\begin{equation}\label{inv}
\psi(t)=G^{-1}(t), \quad \quad G(r):=\int_0^r \frac{1}{ \sqrt {g(s)}} ds.
\end{equation}
In the case when $\psi$ coincides with the 1D solution $\varphi$ given in \eqref{1Dp}, then the associated function $g$ equals $W$, see \eqref{1Dp2}.

In our setting we define $g$ explicitly as
$$ g(s):= W(s) + \bar \eps + C_1 s^{1-\frac \gamma 2}, \quad s \in (0, s_0], $$
with $C_1=8 \, n$ universal, and $s_0$ given by the solution to $$C_1 s^{1-\frac \gamma 2} = c_\gamma s^{-\gamma}=W(s) \quad \mbox{when} \quad s=s_0,$$
and  $\bar \eps>0$ arbitrarily small. Notice that $s_0 \to 0$ as $\gamma \to 2^-$, and from the formula for $c_\gamma$ in \eqref{1Dp0} it follows
 \begin{equation}\label{1001}
 s_0 \sim 2-\gamma.
 \end{equation} Notice that
$$g(s_0) \le 3 C_1s_0^{{1-\frac \gamma 2}}\le 3 C_1=: C_0^2.$$
This gives property 3) since $$\psi(t_0)=s_0, \quad \quad \psi'(t_0)=(g(s_0))^{1/2}.$$
By construction $g \ge W$ which by \eqref{inv} implies $\psi \ge \varphi$. Thus $$s_0 =\psi(t_0) \ge \varphi (t_0),$$ 
and by \eqref{1Dp}, \eqref{1001}, it follows that also $t_0 \to 0$ as $\gamma \to 2^-$. 

We compute
$$ g' = W' + C_1 (1-\frac \gamma 2) s^{-\frac \gamma 2}$$
and use the inequality 
$$C_1 (1-\frac \gamma 2) \ge 8n \sqrt {c_\gamma},$$ 
and that $g \le 3 W$ in the interval $[0,s_0]$ to obtain
$$g' \ge W' + 4n (3 W)^{1/2} \ge W' + 4n g^{1/2},$$
which gives 2). 

Finally, we obtain property 1) from \eqref{inv} and the expansion
$$ \frac{1}{ \sqrt {g(s)}}= \frac{1}{ \sqrt {W(s)}} \left( 1- c(\gamma) \bar \eps s^\gamma + O(s^{1+ \frac \gamma 2})\right).$$

\qed

 \section{Density estimates}
 
 In this technical section we prove Theorem \ref{T0} and Corollary \ref{C0}. We follow the classical ideas from the minimal surface theory by constructing appropriate competitors for the minimizer $u$, and then make use of the isoperimetric inequality. They allows us to obtain discrete differential inequalities for the measure of the sets $\{u>0\}$ in $B_r$, which give the desired conclusion after iteration.
 
 We start with the lower bound.

 \begin{lem} \label{L1} Let $u$ be a minimizer of $J_\gamma$ in $B_1$ and assume $0\in F(u).$ Then 
 $$|\{u>0 \cap B_r| \ge c_0 |B_r|.$$
 \end{lem}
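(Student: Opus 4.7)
The plan follows the classical De Giorgi-type density argument from minimal surface theory: construct a competitor that vanishes on a smaller sub-ball, extract a bound from minimality, and combine with a relative isoperimetric inequality to obtain a discrete differential inequality for $V(r) := |\{u>0\} \cap B_r|$ that iterates to $V(r) \ge c_0 r^n$.

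More concretely, I fix $r \in (0, 1/2]$ and, via coarea in the annulus $B_r \setminus B_{r/2}$, pick a ``good'' radius $s \in (r/2, r)$ on which the surface integrals $\int_{\partial B_s} u^2\, d\sigma$ and $\int_{\partial B_s} |\nabla u|^2\, d\sigma$ are bounded by the corresponding bulk integrals divided by $r$, and on which $\mathcal{H}^{n-1}(\{u>0\} \cap \partial B_s) \lesssim r^{-1}(V(r) - V(r/2))$. Then I take as competitor the function equal to $u$ outside $B_r$, equal to $0$ on $B_s$, and defined in the annulus $B_r \setminus B_s$ by a Lipschitz radial interpolation that decreases from $u|_{\partial B_r}$ down to $0$ along radii. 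Keeping $v \equiv 0$ on $B_s$ avoids the singular potential there; in the transition annulus, Theorem \ref{P1} (which gives $u \le C r^\alpha$) controls the slope of the interpolation, and a direct computation bounds the gradient and potential contributions of $v$ on $B_r \setminus B_s$ by $C r^{n - \alpha\gamma}$, uniformly in $\gamma$. The uniformity rests on the specific normalization $c_\gamma = (2-\gamma)^2/16$, which is chosen precisely so that perimeter scaling $r^{n-1}$ is recovered in the $\gamma \to 2$ limit.

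By minimality, $J_\gamma(u, B_s) \le J_\gamma(v, B_r \setminus B_s) \le C r^{n - \alpha\gamma}$. On the other hand, the pointwise Modica--Mortola inequality
$$ |\nabla u|^2 + W_\gamma(u) \,\ge\, 2 \sqrt{W_\gamma(u)}\,|\nabla u| \,=\, |\nabla u^{1-\gamma/2}|,$$
combined with the normalization \eqref{1Dp0} and coarea, converts $J_\gamma(u, B_s)$ into a perimeter-type lower bound on $\mathcal{H}^{n-1}(F(u) \cap B_s)$. Inserting this into the relative isoperimetric inequality for $\{u>0\}\cap B_s$ yields a discrete differential inequality of the shape
$$ V(s)^{(n-1)/n} \,\le\, C\bigl(V(r) - V(s)\bigr) + \text{lower-order},$$
which, after iteration over a dyadic sequence of radii shrinking to zero, gives $V(r) \ge c_0 r^n$.

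The principal obstacle will be the uniform-in-$\gamma$ construction of the competitor: the coefficient $c^*_\gamma$ of the 1D profile $\varphi(t) = c^*_\gamma t^\alpha$ vanishes as $\gamma \to 2$, so $\varphi$-type barriers alone do not give uniform estimates. The remedy is to exploit the universal growth bound from Theorem \ref{P1} in conjunction with the scaling built into $c_\gamma$: together they keep the competitor's annular energy at perimeter order $r^{n-1}$ as $\gamma \to 2$, making the whole chain of estimates independent of $\gamma$.
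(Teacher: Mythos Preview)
Your high-level architecture (competitor that vanishes on an inner ball, minimality, Modica--Mortola lower bound, isoperimetric, discrete iteration) matches the paper's. But the specific competitor you propose does not have finite energy, and this is not a technicality you can repair with the growth bound.

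If $v$ is Lipschitz and vanishes on $\partial B_s$, then near $\partial B_s$ one has $v(x)\le L\,(|x|-s)$, hence
\[
\int_{B_r\setminus B_s} W_\gamma(v)\,dx \;\ge\; c_\gamma\, r^{\,n-1}\!\int_0^{r-s} \bigl(L t\bigr)^{-\gamma}\,dt \;=\;+\infty \qquad\text{for every }\gamma\in[1,2).
\]
For $\gamma<1$ the integral is finite but carries a factor $(1-\gamma)^{-1}$, so the bound is not uniform as $\gamma\to 1^-$ either. Theorem~\ref{P1} controls the \emph{size} of $u$ on $\partial B_r$, not the rate at which your competitor approaches zero; it is the latter that governs the integrability of $W_\gamma(v)$. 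The only profile whose potential energy on a thin strip is uniformly of order $r^{\,n-\alpha\gamma}$ is the one that behaves like the 1D solution $\varphi(t)=c_\gamma^*\,t^\alpha$ near the zero set, precisely because $|\varphi'|^2=W_\gamma(\varphi)$.

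This is exactly the content of the paper's construction: the competitor is $\min\{u,\Psi\}$ with $\Psi(x)=\psi(|x|-(1-t_0))$, where $\psi$ is built in Lemma~\ref{lem2} to coincide with $\varphi$ near $0$ (so that the energy in the transition layer is finite and equals the ``perimeter'' integral \eqref{1025}) and to be a strict supersolution further out (so that the outer region $D_2$ contributes only a boundary term, see \eqref{D2in}). Once you replace your Lipschitz glue by this $\varphi$-based barrier, the remaining steps of the paper --- averaging over radial translates $\Psi_t$ to trade the surface term for a volume increment, and the dichotomy on $b=|\{0<u\le s_1\}|$ via the auxiliary level $s_1$ in \eqref{lhs} --- are what make the discrete inequality \eqref{c1} uniform in $\gamma$. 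Your outline omits both of these, and without the $s_1$ splitting the isoperimetric step does not directly yield control on $|\{u>0\}|$ (it only controls individual level sets $\{u>s\}$, and the set $\{0<u\le s_1\}$ must be handled separately through the potential term).
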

 
 \begin{proof} After a dilation, assume $u$ minimizes $J$ in $B_3$. Since $0 \in F(u)$, 
 $$u \le C_0 \quad \mbox{ in} \quad  B_{2},$$ 
 by Theorem \ref{P1}.
 Define $$ A_r:= \{ u>0\} \cap B_r, \quad \quad a(r):=|A_r|.$$
 It suffices to show that  $$a(1) \le c_0 \quad \Longrightarrow \quad a(r)=0 \quad \mbox{ for all $r$ sufficiently small,}$$ 
 which is not possible since $0 \in F(u)$.
We consider the case when $\gamma$ is close to 2.

 Define $s_0$, $t_0$, as
 \begin{equation}\label{s_0}
 W(s_0)=1, \quad \varphi(t_0)=s_0   \quad \Longrightarrow \quad \varphi '(t_0) =\sqrt{W(s_0)}=1
  \end{equation}
 and notice that $s_0, t_0 \to 0$ as $\gamma \to 2^-$. 
 
 \
 
 {\it Step 1:} We show that the densities of the sets $A_r$ in $B_r$ decay geometrically as we rescale by a factor of $1 - 2 t_0$, i.e if $a(1) \le c_0$ then
 \begin{equation}\label{c1}
  r_0^{-n} a(r_0) \le  r_0 a(1) \quad \mbox{with} \quad r_0:=1-2 t_0.
  \end{equation}
  
  \
 
 First we construct a 1D function.
 \begin{lem}\label{lem2}
 There exists a piecewise $C^1$ function $\psi$ in $[0,1]$ such that
 
 1) $$\psi(t)=\varphi(t) \quad \quad \mbox{ if $ t \le t_0$,}$$
 
 2) $$2 \psi'' + 4n \psi' \le W'(\psi)   \quad \mbox{ if $t \ge t_0$,}$$
 
 3) $$\psi (1) \ge 2 C_0, \quad \psi'(t_0) \le C_1 \quad \mbox{ for some $C_1$ large universal. }$$
 
 \end{lem}
 
 \begin{proof}
 Indeed, we may take
 $$\psi:=\varphi + K g(t) \, \chi_{\{t \ge t_0\}}$$
 with $g$ an increasing $C^2$ function in $[t_0,1]$ such that 
 $$g(t_0)=0, \quad g'' + 2n g' \le -c \mbox{ in $[t_0,1]$,}$$
 and $K$ a sufficiently large universal constant. Properties 1), 3) follow immediately from the definition of $\psi$. For 2) we use that in $[t_0,1]$
 $$2\varphi''= W'(\varphi), \quad \varphi'\le \varphi'(t_0)=1,$$
 hence
 \begin{align*}
 2 \psi'' + 4n \psi'  & \le 2 \varphi '' + 4n \varphi ' - 2Kc \\
  & \le  W'(\varphi) + 4n - 2Kc \\
 & \le W'(\varphi) \\
 & \le W'(\psi),
 \end{align*}
 where in the last inequality we used that $W'$ is an increasing function.
 
 \end{proof}
 \
 
  {\it Proof of Step 1:}
 We use Lemma \ref{lem2} to define $$\Psi(x):=\psi(|x|-(1-t_0)),$$
 and let $$D:= \{ u > \Psi \} \, \subset \quad B_{2-t_0} \cap \{u>0\}.$$
 Notice that $u=\Psi$ on $\partial D$, hence the minimality of $J$ implies
 \begin{equation}\label{101}
 J(u,D) \le J (\Psi, D).
 \end{equation}
 We decompose $D$ as the disjoint union
 $$ D=D_1 \cup D_2, \quad \quad D_1:= D \cap B_{1}, \quad D_2:=D \setminus B_{1},$$ 
and notice that
\begin{align}\label{D2in} & \quad \quad \quad J(\Psi,D_2) - J(u, D_2)=\\
\nonumber &=\int_{D_2} -2\nabla (u-\Psi) \cdot \nabla \Psi - |\nabla (u-\Psi)| ^2 + W(\Psi)-W(u) dx  \\
\nonumber & \le \int_{D_2} (u-\Psi) 2\triangle \Psi + W(\Psi) - W(u) dx + \int_{\partial D_2}2 (u-\Psi) |\nabla \Psi| d \sigma \\
\nonumber & \le \int_{D_2} (u-\Psi) W'(\Psi)+ W(\Psi) - W(u) dx + \int_{\partial D_2 \cap \partial B_1}2 (u-\Psi) |\nabla \Psi| d \sigma\\
\nonumber & \le C \mathcal H^{n-1} (\{u>0\} \cap \partial B_{1}),
\end{align}
where we have used that 
$$0 \le u-\Psi \le C, \quad 2 \triangle \Psi \le W'(\Psi), \quad |\nabla \Psi| \le C \quad \mbox{on $\partial B_1$} ,$$
and that $W$ is convex on its positivity set.

Combining \eqref{101} and \eqref{D2in} we find
\begin{equation}\label{102}
J(u,D_1) \le J(\Psi,D_1) + C \mathcal H^{n-1} (\{u>0\} \cap \partial B_{1}).
\end{equation}
In $D_1$ we use the Cauchy-Schwartz inequality and the coarea formula to obtain
\begin{align}\label{1023}
\nonumber J(u,D_1) &\ge \int_{D_1 \cap \{u< \varphi(t_0) \}} 2|\nabla u| \sqrt{W(u)} dx \\
& = \int_0^{s_0} \mathcal H^{n-1}(\{u=s\}\cap D_1) \, 2 \sqrt {W(s)}ds.
\end{align} 
On the other hand $|\nabla \Psi|=\sqrt{W(\Psi)}$ in $D_1$ by construction (see 1) in Lemma \ref{lem2} and \eqref{1Dp2}) and the inequality above becomes an equality for $\Psi$:
\begin{equation}\label{1025}
J(\Psi,D_1) = \int_0^{s_0} \mathcal H^{n-1}(\{\Psi=s\} \cap D_1) \, 2 \sqrt {W(s)} ds.
\end{equation}
Next we use that $$\{u>s\} \cap B_{1-2t_0} \subset D_1 \cap \{ u>s > \Psi \}, \quad \quad s>0,$$ and the isoperimetric inequality implies
$$c_n |\{u>s\} \cap B_{1-2t_0}|^\frac{n-1}{n} \le \mathcal H^{n-1}(\{u=s\}\cap D_1) + \mathcal H^{n-1}(\{\Psi=s\} \cap D_1),$$
hence
\begin{equation}\label{103}
\int_0^{s_0}  c_n |\{u>s\} \cap B_{1-2t_0}|^\frac{n-1}{n} \, 2 \sqrt {W(s)}  ds \le J(u,D_1) + J(\Psi, D_1).
\end{equation}
We combine this with \eqref{102}, \eqref{1025} and use that 
\begin{equation}\label{1031}
 \int_{B_{1-t_0}} W(u) dx \le J(u,D_1),
 \end{equation} and obtain
\begin{align}\label{enin}
\nonumber \int_{B_{1-2t_0}} W(u) dx \, + \, & \int_0^{s_0}   c_n  |\{u>s\} \cap B_{1-2t_0}|^\frac{n-1}{n} \, 2 \sqrt {W(s)} ds \le \\
\le & \quad C \mathcal H^{n-1} ( \{\Psi=s_0\} \cap  \{u>0\} ) + \\
\nonumber  & + C \int_0^{s_0} \mathcal H^{n-1}(\{\Psi=s\}\cap \{u>0\}) \, 2 \sqrt {W(s)} ds.
 \end{align}
The inequality holds also when we replace $\Psi$ by $\Psi_t$ defined as 
$$\Psi_s(x):=\psi(|x|-(1-t_0-t)), \quad \quad t \in [0,t_0].$$
Notice that $\Psi_0=\Psi$ and $\{\Psi_t = s \}$ is the sphere at distance $t$ from the sphere $\{\Psi=s\}$. Thus, if we write the inequality above for $t \in [0,t_0]$ and average it over this interval we obtain
$$\int_{B_{1-2t_0}} W(u) dx + \int_0^{s_0}   c_n  |\{u>s\} \cap B_{1-2t_0}|^\frac{n-1}{n} \, 2 \sqrt {W(s)} ds \le $$
\begin{equation}\label{mi}
 \le C t_0^{-1} |\{ u>0\} \cap (B_{1} \setminus B_{1-2t_0})|  \int_0^{s_0} \, 2 \sqrt {W(s)} ds.
  \end{equation}
Let $s_1 \in [0, s_0]$ and denote by
$$b:=|\{ 0< u \le s_1\} \cap B_{1-2t_0}|,$$
hence if $s \le s_1$ then
$$|\{u>s\} \cap B_{1-2t_0}| \le |\{u>s_1\} \cap B_{1-2t_0}|=a(1-2t_0) - b.  $$ 
Notice that by the choice of $c_\gamma$ we have
$$ \int_0^{s_1} 2 \sqrt {W(s)} ds = s_1^{1- \frac{\gamma}{2}}.$$
Since $W(u) \ge W(s_1)$ in the set $\{0<u\le s_1\}$, we can bound below the left hand side in \eqref{mi} by
\begin{equation}\label{lhs}
W(s_1) b + c_1 \, s_1^{1-\frac \gamma 2}(a(1-2t_0)-b)^\frac{n-1}{n},
\end{equation}
while the right hand side in \eqref{mi} is bounded above by
$$C_2 \frac{a(1)-a(1-2t_0)}{2 t_0},$$
with $C_2$, $c_1$ universal constants. 

We choose $s_1$ such that $$W(s_1)=C_3 \gg C_2, \quad \quad \mbox{i.e.} \quad s_1 = C_3^{- \frac 1 \gamma} s_0=(c_\gamma/C_3)^\frac 1 \gamma.$$
Using that $c_\gamma \sim (2-\gamma)^2$ we find that the coefficient 
$$c_1 \,  s_1^{1-\frac \gamma 2} $$
which appears in \eqref{lhs} remains bounded below as $\gamma \to 2^-$. This means that if $a(1-2t_0) \le a(1) \le c_0$ small, universal, then the expression in \eqref{lhs} is decreasing in the variable $b \in [0, a(1-2t_0)]$ and is bounded below by $C_3 \, a(1-2t_0)$.
In conclusion
$$ C_3 \, \, a(1-2t_0) \le  C_2 \, \frac{a(1)-a(1-2t_0)}{2 t_0} $$ which gives \eqref{c1}:
$$ a(1-2t_0) (1-2t_0)^{-(n+1)} \le a(1),$$
and Step 1 is proved.
\qed

\

As we iterate Step 1 we find that the densities of the positivity set in $B_r$, $a(r) r^{-n}$, tend to $0$ as $r=r_0^m \to 0$. After rescaling, it remains to show that if $a(1)$ is sufficiently small, depending on $\gamma$, then $a(1/2)=0$.

\

{\it Step 2:} If $a(1) \le c(\gamma)$ small then for all $r \in [1/2,1]$,
\begin{equation}\label{ddeq}
 a(r-2t)^\delta \le \frac{a(r)-a(r-2t)}{2t}, \quad \quad t=a(r)^\mu,
 \end{equation}
with $\delta$, $\mu$ universal constants.

\

 {\it Proof of Step 2:}
 Assume for simplicity that $r=1$. Notice that by Theorem \ref{P1} it follows that 
$$ u \le C a(1)^\frac{\alpha}{n} \quad \mbox{in $B_1$.}$$
We argue as in Step 1 and improve the last part of the argument. Take
$$ \Psi =\psi (|x|-(1-t_1)) $$
with $t_1 \in (0,t_0]$ such that 
$$\varphi(t_1) = a(1)^\mu \gg \|u \|_{L^\infty(B_1)}.$$
This means that $\{u < \Psi \}$ on $\p B_1$ and now we may take $D= \{u>\Psi \} \cap B_1$. We obtain as above the corresponding inequality \eqref{enin} with $t_0$ replaced by $t_1$. After averaging over the family of translates $\Psi_t$ with $t \in [0,t_1]$ we establish the inequality \eqref{mi} with $t_0$ replaced by $t_1$.
We bound the left hand side as before by taking 
$$ s_1=\varphi(t_1)=a(1)^\mu,$$
and obtain
$$ W(s_1) s_1^{\frac \gamma 2 -1} \, b + (a(1-2t_1)-b)^\frac{n-1}{n} \le C \, \, \frac{a(1)-a(1-2t_1)}{2 t_1}.$$ 
Using that $$s_1= a(1)^\mu \ge a(1-2t_1)^\mu,$$ the coefficient of $b$ in the left hand side is bounded below by a negative power of $a(1-2t_1)$ (provided that $a(1)$ is sufficiently small, depending on $\gamma$). Then, by arguing that 
$$\mbox{either} \quad b \le \frac{a(1-2t_1)}{2} \quad \mbox{ or}  \quad b \ge \frac{a(1-2t_1)}{2}, 
$$we obtain that the left hand side is bounded below by  
$$ a(1-2 t_1)^{1-\delta}, $$
for some $\delta$ universal. After relabeling $\delta$ if necessary we reach the desired discrete differential inequality claimed in Step 2.
$$ a(1-2 s_1)^{1-\delta}  \le \frac{a(1)-a(1-2s_1)}{2 s_1}, \quad \quad s_1=a(1)^\mu.$$
\qed

\

{\it End of the proof:}
Now it is straightforward to check that a nondecreasing function $a(r)$ that satisfies \eqref{ddeq} must vanish when $r=1/2$ if $a(1)$ is sufficiently small. In the continuous setting we obtain $a' \ge a^{1-\delta}$ which implies $$a(r) \le (r-1/2)^M,$$ for some large $M$, provided that the inequality is satisfied at $r=1$. In the discrete setting it follows by induction that the inequality above holds for $r=r_k$ where $r_k$ is the sequence 
$$r_{k+1}=r_k - 2 a(r_k)^\mu, \quad \quad r_0=1.$$

\end{proof}

\begin{rem}\label{R1}
From \eqref{102} and \eqref{1025} it follows that
$$J(u,B_{1/2}) \le J(u,D_1) \le C,$$
with $C$ universal. 
\end{rem}

Next we prove the other side of the density bound using a similar analysis.

\begin{lem}\label{L2}
Let $u$ be a minimizer of $J_\gamma$ in $B_1$ and assume $0\in F(u).$ Then 
 $$|\{u=0\} \cap B_r| \ge c_0 |B_r|.$$
\end{lem}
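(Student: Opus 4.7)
Plan for Lemma \ref{L2}. The plan is to parallel the proof of Lemma \ref{L1}, recycling the same radial barrier and competitor but analyzing the integrated energy inequality \eqref{mi} in the complementary regime to extract a lower bound on the density of $\{u=0\}$. After the usual dilation we assume $u$ minimizes $J$ in $B_3$ with $u\le C_0$ in $B_2$ (Theorem \ref{P1}), and set $b(r):=|\{u=0\}\cap B_r|$. Arguing by contradiction we suppose $b(1)\le c_0|B_1|$ for a sufficiently small universal $c_0$ and aim to show that $b(r)=0$ on some small ball $B_r$, which combined with continuity of $u$ and the superharmonic equation $2\Delta u=W'(u)<0$ on $\{u>0\}$ contradicts $0\in F(u)$.

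The competitor is once again $v:=\min(u,\Psi)$, where $\Psi(x):=\psi(|x|-(1-t_0))$ is the radial function built from the supersolution $\psi$ of Lemma \ref{lem2}, defined on $B_{2-t_0}\setminus B_{1-t_0}$ and extended by $0$ on $B_{1-t_0}$; in particular $v\equiv 0$ on $B_{1-t_0}$ and $\Psi\ge 2C_0\ge u$ on $\p B_{2-t_0}$. Testing $J(u)\le J(v)$ and running the same integration-by-parts, convexity, Cauchy--Schwarz, coarea and isoperimetric chain reproduces inequality \eqref{mi}. The novelty lies in analyzing it in the regime where the positive set almost fills $B_{1-2t_0}$: with $s_1$ chosen so that $W(s_1)$ is a large universal constant, the potential term on the left forces $|\{0<u\le s_1\}\cap B_{1-2t_0}|\le C/W(s_1)$, and the coarea term is bounded below by $c_1 s_1^{1-\gamma/2}(|B_{1-2t_0}|-b(1-2t_0))^{(n-1)/n}$, while the right-hand side is bounded by $Ct_0^{-1}|B_1\setminus B_{1-2t_0}|\sim C$. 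A discrete differential inequality for $b(r)$ analogous to the one in Step 2 of Lemma \ref{L1} (with the threshold $s_1$ adapted to the small parameter) then forces $b(r)=0$ at some small scale.

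The main obstacle is the reversal of the dominant term. In Lemma \ref{L1} the mass contribution $W(s_1)b_s$ dominates; here the coarea contribution does. Keeping $c_1 s_1^{1-\gamma/2}$ bounded below uniformly as $\gamma\to 2$ relies once again on the normalization $c_\gamma=(2-\gamma)^2/16$ of \eqref{1Dp00} and the scale $s_0\sim 2-\gamma$ of \eqref{1001}. The final step --- extracting a contradiction with $0\in F(u)$ from $b(r)=0$ rather than from the cleaner $a(r)=0$ of Lemma \ref{L1} --- requires combining continuity of $u$, $u(0)=0$, and the superharmonicity on $\{u>0\}$ via a removable-singularity / strong maximum principle argument to conclude $u\equiv 0$ on $B_r$, in contradiction with $b(r)=0$ (as that would force $b(r)=|B_r|>0$).
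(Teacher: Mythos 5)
Your proposal has a genuine gap, and it is structural: the barrier and competitor of Lemma \ref{L1} cannot be recycled for Lemma \ref{L2}. In Lemma \ref{L1} the supersolution $\psi$ of Lemma \ref{lem2} produces a barrier $\Psi$ that lies \emph{above} $u$ near $\p B_1$ and is compressed to $0$ on $B_{1-t_0}$; replacing $u$ by $\min(u,\Psi)$ tests how small $\{u>0\}$ can be, and the isoperimetric inequality is applied to $\{u>s\}\cap B_{1-2t_0}$, which is small under the contradiction hypothesis $a(1)\le c_0$. To detect the zero set one needs the opposite move: a barrier that lies \emph{below} $u$ near $\p B_1$, rises to a bounded positive constant in the interior, and fills in the zero set, so that $E:=\{u=0\}\cap B_{1-t_1}\subset\{u\le s\le\Psi\}$ is sandwiched between two level sets and the isoperimetric inequality bounds $|E|^{(n-1)/n}$. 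That is precisely what the paper's Lemma \ref{lem4} is designed for: $\psi$ there satisfies a \emph{subsolution} inequality $2\psi''-8n\psi'\ge W'(\psi)$, is constant in $[1/4,1]$, and the competitor set is $D=\{u<\Psi\}$, not $\{u>\Psi\}$.

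Concretely, your inequality after \eqref{mi} is vacuous in the regime you analyze. You bound the coarea term below by
$c_1 s_1^{1-\gamma/2}\bigl(|B_{1-2t_0}|-b(1-2t_0)-|\{0<u\le s_1\}|\bigr)^{(n-1)/n}$,
but when $b$ is small this expression is bounded above by $c_1|B_1|^{(n-1)/n}$, a fixed universal constant, and the right-hand side $Ct_0^{-1}|B_1\setminus B_{1-2t_0}|$ is likewise a fixed universal constant. There is no smallness on the right to drive a discrete differential inequality for $b(r)$; the inequality is satisfied for any value of $b$ and yields no iteration. Compare with the paper's \eqref{2004}, where the isoperimetric term $\tfrac14|E|^{(n-1)/n}$ plus the potential term $M|A_{1-t_1}\setminus E|$ is bounded by an incremental quantity $C\bigl(a(1)-a(1-2t_0)\bigr)/(2t_0)$ that averages out the annular contribution.

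Two further mismatches. First, the paper iterates the measure $a(r)=|\{u\le s_1\}\cap B_r|$ of a thickened zero set (with $W(s_1)=M$ large universal), not $b(r)=|\{u=0\}\cap B_r|$; this is what makes the final contradiction with $0\in F(u)$ immediate, since Theorem \ref{P1} gives $u\le C|x|^\alpha$ and hence $a(1)\ge c(s_1)>0$. Your final step instead needs to pass from $|\{u=0\}\cap B_r|=0$ to $u>0$ in $B_r$, which measure zero alone does not give (removability of a set for superharmonic functions requires polarity, not just Lebesgue-null measure). Second, the paper splits the argument into a regime $\gamma$ close to $2$ (Steps 1--2) and a regime $\gamma\le 2-\delta$ (Step 3, where a direct energy comparison with the solution of the Euler--Lagrange equation is used); the uniformity as $\gamma\to 2$ is a genuine issue that your plan does not address beyond invoking the normalization of $c_\gamma$.
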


\begin{proof}
Let $s_0$, $s_1$, and $t_1$ be defined as
$$W(s_0)=1, \quad W(s_1)=M, \quad \varphi(t_1)=s_1,$$
with $M$ a large universal constant to be made precise later. Let $$A_r:=\{ u \le s_1 \} \cap B_r, \quad a(r):=|A_r|.$$

\

{\it Step 1:} We prove that if $a(1) \le c_0$ universal, $M \ge C_0$ and $\gamma$ sufficiently close to $2$ (depending on $M$) then 
\begin{equation}\label{ain}
a(r_0)r_0^{-n}  \le r_0 a(1) \quad \quad \mbox{for some fixed $r_0<1$.}
\end{equation}

\

We first construct a 1D profile.

\begin{lem}\label{lem4}
 There exists a nondecreasing Lipschitz function $\psi:[0,1] \to \R$, with $\psi(0)=0$, which is $C^1$ in the intervals $\{\psi<s_1\}$, $\{\psi>s_1\}$ such that
 
 \
 
 1) $\psi=\varphi$ in $[0,t_1]=\{ \psi \le s_1\}$,
 
 \
 
 2) in $(t_1,1]=\{ \psi>s_1\}$, 
 $$ 2\psi'' - 8n \psi' \ge W'(\psi) $$
 and $\psi$ is constant in $[1/4,1]$,
 
 \
 
 3) $$ \frac 12 W(\psi) \le (\psi')^2 \le  W(\psi) \quad \mbox{ in} \quad [0,t_0]:=\{\psi \le s_0\}.$$ 

\

Here $t_0$ is defined such that $$\psi(t_0)=s_0, \quad \mbox{thus} \quad  W(\psi(t_0))=1.$$

\end{lem}

\

{\it Proof of Step 1:}
Define in $ \overline B_1$ the function $$\Psi(x)=\psi(1-|x|),$$ and denote by $$D:=\{ u< \Psi \}.$$
Notice that $\Psi$ vanishes on $\p B_1$ and coincides with $\varphi(1-|x|)$ near $\p B_1$, hence
\begin{equation}\label{2003}
|\nabla \Psi|=\sqrt{W(\Psi)} \quad \mbox{in} \quad B_1 \setminus B_{1-t_1}=\{\Psi \le s_1\}.
\end{equation}
Also by 2)
$$2 \triangle \Psi \ge W'(\Psi) \quad \quad \mbox{in} \quad \{\Psi > s_1\},$$
and 3) implies
\begin{equation}\label{2000}
 \frac 12 W(\Psi) \le |\nabla \Psi|^2 \le W(\Psi) \quad \mbox{in} \quad B_1 \setminus B_{1-t_0},
\end{equation}
and
\begin{equation}\label{2001}
W(\Psi) \le 1 \quad \mbox{in} \quad B_{1-t_0}.
\end{equation}
Denote by 
$$ D_1:=\{ u > s_1\} \cap D, \quad D_2:= D \setminus D_1,$$
$$F_1:=\{ \Psi > s_1\} \cap D, \quad F_2:= D \setminus F_1.$$

Then $J(u,D) \le J(\Psi,D)$ implies
$$ J(u,D_2) \le J(\Psi,F_2) + J(\Psi,F_1) -J(u,D_1).$$

In $$F_1= D_1 \cup A_{1-t_1}$$ we write $$ \max\{ u, \sigma\}=\Psi-w, \quad \mbox{ with $\Psi \ge w \ge 0$,}$$ and notice that 
$w$ vanishes on $\p F_1$ hence
\begin{align}\label{2006}
\nonumber \int_{D_1} |\nabla u|^2 &= \int_{F_1}|\nabla (\Psi-w)|^2 dx \\
& \ge \int_{F_1} |\nabla \Psi|^2 + 2 w \triangle \Psi dx  \\
\nonumber & \ge \int_{F_1} |\nabla \Psi|^2 +  w W' (\Psi) dx  \\
\nonumber & \ge \int_{F_1} |\nabla \Psi|^2 +  (W (\Psi) - W(\Psi-w))\chi_{D_1} - C W(\Psi)\chi_{A_{1-t_1}} dx,
\end{align}
where in the last inequality we used the convexity of $W$ in $D_1$ and the fact that $W'(\Psi)<0$ in $A_{1-t_1}$ thus
$$wW'(\Psi)  \ge \Psi W'(\Psi) = -\gamma W(\Psi).$$ 
Since $\Psi-w=u$ in $D_1$ we find
$$J(u,D_1) \ge J(\Psi, F_1) - C \int _{A_{1-t_1}} W(\Psi) dx,$$
hence
$$J(u,D_2) \le J(\Psi,F_2) + C \int _{A_{1}} W(\Psi) dx.$$
By Cauchy-Schwartz and co-area formula we obtain
$$J(u,D_2) \ge \int_0^{s_1} \mathcal H^{n-1} (\{u=s \} \cap D \} \sqrt{W(s)} ds,$$
while, by \eqref{2003},
$$J(\Psi,F_2) = \int_0^{s_1} \mathcal H^{n-1} (\{\Psi=s \} \cap D \} \sqrt{W(s)} ds.$$
Hence
$$J(\Psi,F_2) \le \int_0^{s_1} \mathcal H^{n-1} (\{\Psi=s \} \cap A_1 \} \sqrt{W(s)} ds,$$
and we also write
 $$\int_{A_1} W(\Psi) dx =\int_{A_1 \cap B_{1-t_0}} W(\Psi) dx + \int_{A_1 \setminus B_{1-t_0}} W(\Psi) dx.$$
 By \eqref{2001} the second term is bounded by $ |A_1|$, while by \eqref{2000} and the co-area formula as above,  the first integral is bounded by
 $$ C \int_0^{s_0} \mathcal H^{n-1} (\{\Psi=s \} \cap A_1 \} \sqrt{W(s)} ds. $$
Using that
$$E:=\{u=0\} \cap B_{1-t_1} \subset \{ u \le s \le \Psi \}, \quad s \in [0,s_1],$$ we find
by the isoperimetric inequality that
$$|E|^\frac{n-1}{n} \int_0^{s_1} \sqrt{W(s)} ds  \le J(u,D_2)+ J(\Psi,F_2).$$
Notice that as $\gamma \to 2$ (and fixed $M$), the integral converges to $$\int_0^1 \sqrt {W(s)} ds = \frac 12.$$ Also
$$W(s_1) |A_{1-t_1} \setminus E| \le \int_{A_{1-t_1}}W(u) dx \le J(u,D_2)$$
 In conclusion
 \begin{align}\label{2004}
 \nonumber \frac 14|E|^\frac{n-1}{n} + M & |A_{1-t_1} \setminus E| \le \\
 & \le C\int_0^{s_0} \mathcal H^{n-1} (\{\Psi=s \} \cap A_1 \} \sqrt{W(s)} ds + C |A_1|.
 \end{align}
 Since $|E| \le a(1)\le c_0$ is sufficiently small, and $M \ge C_0$, the left hand side is bounded below by
$$ \frac{C_0}{2} |A_{1-t_1}| \ge \frac{C_0}{2} a(1-2 t_0).$$
We average the right hand side by taking as test functions 
$$\Psi_t(x)=\psi(1-t-|x|),\quad \quad t \in [0,t_0],$$ and obtain
 $$\frac{C_0}{2} a(1-2 t_0) \le C \frac{a(1)-a(1-2t_0)}{2t_0}  + C a(1)$$
 which implies the desired conclusion \eqref{ain} with $r_0=1-2t_0$,
 $$ a(1-2 t_0) (1-2t_0)^{-(n+1)} \le a(1),$$
 provided $C_0$ is chosen sufficiently large.
 
 \qed

Next we prove the lemma when $\gamma$ is close to 2.
 
\

{\it Step 2:} If $\gamma$ is sufficiently close to 2 then $|\{u=0\} \cap B_1| \ge c_0/2$.

\

{\it Proof of Step 2:}
If the conclusion does not hold then
\begin{equation}\label{2005}
|\{u=0\} \cap B_1| \le c_0/2 \quad \Longrightarrow \quad a(1) \le c_0.
\end{equation}
Indeed, otherwise $$ |\{ 0<u\le s_1\} \cap B_1|\ge c_0/2,$$ and we can apply inequality \eqref{2004} (with $A_{1-t_1}$, $A_1$ replaced by $A_1$, respectively $A_{1+t_1}$) and obtain
$$ M \frac{c_0}{2} \le C\int_0^{s_0} \mathcal H^{n-1} (\{\Psi=s \} \cap A_{1+t_1} \} \sqrt{W(s)} ds + C |A_{1+t_1}| \le C.$$
We get a contradiction by choosing $M$ universal, sufficiently large, and \eqref{2005} is proved.
Now we may apply Step 1 and obtain
$$ |\{ u \le r_0^\alpha s_1\} \cap B_{r_0}| r_0^{-n} \le a(r_0) r_0^{-n} \le r_0 a(1),$$
which can be rescaled and iterated indefinitely. Thus, after a rescaling of $u$ of factor $r_0^m$ with $m$ large we find that 
$a(1)$ can be made arbitrarily small.

We reached a contradiction to $0 \in F(u)$ since, by Theorem \ref{P1}, 
$$a(1) \ge c(s_1) >0.$$  
\qed

Finally, we prove the conclusion also when $\gamma$ stays away from 2.

\

{\it Step 3:} If $\gamma \le 2-\delta$ then  $|\{u>0\} \cap B_1| \ge c(\delta)$. 

\

{\it Proof of Step 3:}
This follows easily by compactness. However, here we sketch a direct proof that follows from an argument in Step 1. 

First we claim that 
$$\max_{\p B_1} u \ge c(\delta),$$
for some $c(\delta)>0$ small. Otherwise, the energy of $u$ in $B_{1/2}$ is sufficiently small, which implies that $\{u>0\}$ has small measure in $B_{1/2}$ and contradicts Lemma \ref{L1}. 

Next, let $v$ be the solution to the Euler-Lagrange equation $2 \triangle \Psi=W'(\Psi)$ in $B_1$, $v=u$ on $\p B_1$. Since $v$ is superharmonic, $v(0)>c(\delta)$. Moreover, $W(v)$ is bounded by an integrable function in $B_1$. As in Step 1, the inequality
$$ J(u,B_1) \le J(v,B_1) $$
implies (see \eqref{2006} with $s_1=0$, $D_1=F_1=B_1$),
$$ \int_{B_1} |\nabla (v-u)|^2 dx \le C \int_{\{u=0\}}W(v)dx.$$
The left hand side is bounded below by a $c_1(\delta)$ which follows from Theorem \ref{P1} and $(v-u)(0)=v(0) \ge c(\delta)$. This shows that $\{u=0\}$ cannot have arbitrarily small measure.
 \end{proof}

 It remains to prove the existence of the 1D profile of Lemma \ref{lem4}.

  \
 
 {\it Proof of Lemma \ref{lem4}:} We construct $\psi$ by defining its corresponding function $g$ as in $\eqref{gred}$, $\eqref{inv}$. Let $g$ be the perturbation of $W$
 $$ g(s)= W(s) + \left(-\frac 12 + C_n(s^{1-\frac \gamma 2}-s_1^{1-\frac \gamma 2})\right) \chi_{[s_1,1]},$$
 with $C_n=8n$. Let $s_2$ be defined as 
 $$W(s_2)=\frac 14,$$
 hence $s_2=4^{1/\gamma} s_0 \sim s_0$, and notice that $s_2 \to 0$ as $\gamma \to 2$. Moreover $$W' =- \gamma W/s \le -C \quad \mbox{ in} \quad [0,s_2]$$ which implies that $g' \le -C$ in the same interval.
Furthermorer, for $\gamma$ sufficiently close to $2$ (depending on $M$), then $s_1^{1-\gamma/2}$ is close to $1$ hence the error $g(s)-W(s)$ is uniformly close to the constant $-1/2$ in the interval $[s_1,1]$. 

These facts imply that $ g \le W$, and $g$ crosses $0$ at some point $\sigma \in [s_0,s_2]$, and 
 $$ g \ge \frac 12 W \quad \mbox{in} \quad [s_1,s_0], $$
 which gives property 3). Property 1) follows directly from the definition.
 Finally, property 2) holds since in $(s_1,1] \cap \{g>0\}$
 $$g' - W' = C_n (1-\frac \gamma 2) s^{-\gamma/2} \ge 8n \sqrt W \ge 8n \sqrt g.$$
Moreover,
 \begin{align*}
 \int_{\{g>0\}} (2 g)^{-1/2} ds &= \int_0^\sigma (2 g)^{-1/2} ds \\
 & \le \int_0^{s_0} W^{-1/2} ds + C\int_{s_0}^\sigma s_0^{1/2} (\sigma-s)^{-1/2} ds \\
 & \le 2^{1/2} t_0 + C s_0 \\
 &\le 1/4
 \end{align*}
 which shows that $\psi$ is constant outside an interval of length $1/4$.
 \qed
 
 We conclude this section with a proof of Corollary \ref{C0}.
 
 \
 
  {\it Proof of Corollary \ref{C0}.} 
  Assume that $u$ is a minimizer of $J$ in $B_2$ and $0 \in F(u)$. First we prove that
  \begin{equation}\label{1070}
  c \le J(u,B_1) \le C,
  \end{equation}
  with $c$, $C$ universal constants. 
  
  The upper bound follows from Remark \ref{R1}. For the lower bound, we use that 
  $$(1-c_0) |B_1| \ge |\{u>0\} \cap B_1| \ge c_0 |B_1|.$$ 
  Let $s_0$ be defined as in the proof of Lemma \ref{L1}, see \eqref{s_0}. If
 \begin{equation}\label{1071}
  |\{u >s_0\} \cap B_1| \le \frac{c_0}{2} |B_1|,
  \end{equation}
  then 
  $$ | \{0<u \le s_0\} \cap B_1| \ge \frac{c_0}{2} |B_1|. $$
  
  In this last set $W(u) \ge W(s_0)=1$, and the lower bound is obtained from the potential term.
  
  On the other hand, if the opposite inequality in \eqref{1071} holds, then for all $s \in (0,s_0)$
  the density of $\{u>s\}$ in $B_1$ is bounded both above and below by universal constants. Now the lower bound follows from \eqref{1023} and the Poincar\'e inequality for $\chi_{\{u>s\}}$ in $B_1$.
  
 The existence of a full ball of radius $c'$ included in $\{u>0\} \cap B_1$ (or $\{u=0\}\cap B_1$) follows by a standard covering argument. We sketch it below.
 
 We take a collection of $m$ disjoint balls $B_\rho(x_i)$, $x_i \in \{u>0\} \cap B_1$ such that $\cup B_{5\rho}(x_i)$ covers $\{u>0\} \cap B_1$. It follows that $m \sim \rho^{-n}$. If we assume that each $B_{\rho/2}(x_i)$ intersects the free boundary then, by the rescaled version of \eqref{1070}, 
 $$J(u, B_\rho(x_i)) \ge c \rho^{n-\alpha \gamma}.$$
We obtain 
$$J(u,B_1) \ge m \, c \, \rho^{n-\alpha \gamma},$$ 
and we contradict the upper bound if $\rho$ is chosen small, universal.
\qed
 
 \section{The Gamma convergence}
 
 In this section we prove our main result Theorem \ref{TM}. We start by constructing an interpolation between two functions which are close to each other in a ring.
 
 \begin{prop}\label{P2}
 Let $u_k,v_k$ be sequences in $H^1(B_1)$ and $\gamma_k \to 2^-$. Assume that for some $\rho \in (\frac 12,1)$ and $\delta>0$ small,
 $$J_{\gamma_k}(u_k, B_{\rho+\delta}), \quad J_{\gamma_k}(v_k,B_{\rho+ \delta})$$
 are uniformly bounded, and
 $$\| u_k-v_k\|_{L^2} + \|u_k^{1-\frac{\gamma_k}{2}}-v_k^{1-\frac{\gamma_k}{2}}\|_{L^1} \to 0 \quad \mbox{ in} \quad  B_{\rho+\delta} \setminus \bar B_\rho, \quad \mbox{as $k\to \infty.$ }$$ 
 Then, there exists $w_k \in H^1(B_1)$ with $$w_k:= \begin{cases}v_k \quad \text{in $B_{\rho}$}\\ u_k \quad \text{in $B_1 \setminus \bar B_{\rho+\delta}$}\end{cases}$$ such that 
 $$J_{\gamma_k}(w_k, B_1) \leq J_{\gamma_k}(v_k, B_{\rho + \delta}) + J_{\gamma_k}(u_k, B_1 \setminus \bar B_{\rho}) + o(1),$$
 with $o(1) \to 0$ as $ k \to \infty$.

 \end{prop}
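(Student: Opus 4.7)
The plan is a Modica--Mortola style slicing-and-cutoff construction, adapted to handle the degeneracy of $W_{\gamma_k}$ near $0$. Set
\[
\varepsilon_k := \|u_k-v_k\|_{L^2(B_{\rho+\delta}\setminus\bar B_\rho)}^2 + \|u_k^{1-\gamma_k/2}-v_k^{1-\gamma_k/2}\|_{L^1(B_{\rho+\delta}\setminus\bar B_\rho)},
\]
which tends to $0$ by hypothesis. Partition the radial interval $(\rho,\rho+\delta)$ into $N_k$ equal sub-intervals, giving sub-annuli $A_1,\dots,A_{N_k}$ of radial width $\delta/N_k$. Since the sum over $i$ of $J_{\gamma_k}(u_k,A_i)+J_{\gamma_k}(v_k,A_i)$ is uniformly bounded while the sum of the pair-difference terms equals $\varepsilon_k$, pigeonholing yields an index $i_k$ with
\[
J_{\gamma_k}(u_k,A_{i_k})+J_{\gamma_k}(v_k,A_{i_k}) \le \tfrac{C}{N_k}, \qquad \int_{A_{i_k}}\!\bigl[(u_k-v_k)^2+|u_k^{1-\gamma_k/2}-v_k^{1-\gamma_k/2}|\bigr]\,dx \le \tfrac{C\varepsilon_k}{N_k}.
\]
Pick $N_k\to\infty$ slowly enough that $N_k\varepsilon_k\to 0$.

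On the selected $A_{i_k}=B_{r^+}\setminus \bar B_{r^-}$, introduce a smooth radial cutoff $\eta_k$ with $\eta_k\equiv 1$ on $B_{r^-}$, $\eta_k\equiv 0$ on $\mathbb{R}^n\setminus B_{r^+}$ and $|\nabla\eta_k|\le CN_k/\delta$, and set $w_k := \eta_k v_k + (1-\eta_k) u_k$. Then $w_k=v_k$ on $B_\rho$ and $w_k=u_k$ on $B_1\setminus\bar B_{\rho+\delta}$, and since $w_k$ agrees with $u_k$ or $v_k$ outside $A_{i_k}$,
\[
J_{\gamma_k}(w_k,B_1\setminus A_{i_k}) \le J_{\gamma_k}(v_k,B_{\rho+\delta})+J_{\gamma_k}(u_k,B_1\setminus\bar B_\rho).
\]
It therefore suffices to show $J_{\gamma_k}(w_k,A_{i_k})\to 0$. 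The Dirichlet term is handled by the convexity inequality $|\eta a+(1-\eta)b|^2\le\eta|a|^2+(1-\eta)|b|^2$, giving
\[
\int_{A_{i_k}}|\nabla w_k|^2\,dx \le 2\bigl[J_{\gamma_k}(u_k,A_{i_k})+J_{\gamma_k}(v_k,A_{i_k})\bigr]+2\|\nabla\eta_k\|_\infty^2\int_{A_{i_k}}(u_k-v_k)^2\,dx \le \tfrac{C}{N_k}+\tfrac{CN_k\varepsilon_k}{\delta^2}=o(1).
\]

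The main obstacle is the potential term. On the overlap set $\{u_k>0\}\cap\{v_k>0\}$, convexity of $s\mapsto s^{-\gamma_k}$ on $(0,\infty)$ gives $W_{\gamma_k}(w_k)\le \eta_k W_{\gamma_k}(v_k)+(1-\eta_k)W_{\gamma_k}(u_k)$, whose $A_{i_k}$-integral is controlled by $J_{\gamma_k}(u_k,A_{i_k})+J_{\gamma_k}(v_k,A_{i_k})=O(1/N_k)$. The delicate regions are the zero-set mismatches $\{v_k=0<u_k\}$ and $\{u_k=0<v_k\}$ in $A_{i_k}$, where $W_{\gamma_k}(w_k)$ reduces to $(1-\eta_k)^{-\gamma_k}W_{\gamma_k}(u_k)$ or $\eta_k^{-\gamma_k}W_{\gamma_k}(v_k)$ and can blow up as the cutoff approaches the endpoints. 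The plan is to split $A_{i_k}$ radially at the level set $\{\eta_k=1/2\}$: on the outer half $(1-\eta_k)^{-\gamma_k}\le 2^{\gamma_k}\le 4$, so the $\{u_k>0\}$-contribution there is absorbed into $4\int_{A_{i_k}}W_{\gamma_k}(u_k)=O(1/N_k)$, and symmetrically for the inner half. What remains are two thin radial strips adjacent to $\partial B_{r^\pm}$ where the cutoff is close to $1$ or $0$ and only the \emph{wrong} mismatch set contributes; these are to be controlled by averaging the cutoff over a family of radial translates (a Fubini-style argument) and invoking the bound $\int_{A_{i_k}}|u_k^{1-\gamma_k/2}-v_k^{1-\gamma_k/2}|=O(\varepsilon_k/N_k)$, which quantifies the Lebesgue size of the zero-set mismatch and is the tool that keeps the singular integrand integrable. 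This delicate potential estimate next to the boundary spheres is the principal technical point; once resolved, summing over all pieces yields the claimed $J_{\gamma_k}(w_k,B_1)\le J_{\gamma_k}(v_k,B_{\rho+\delta})+J_{\gamma_k}(u_k,B_1\setminus\bar B_\rho)+o(1)$.
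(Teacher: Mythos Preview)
You correctly identify the crux—the potential term on the zero-set mismatches—but the resolution you sketch does not work. On $\{v_k=0<u_k\}\cap A_{i_k}$ your interpolant is $w_k=(1-\eta_k)u_k$, so $W_{\gamma_k}(w_k)=(1-\eta_k)^{-\gamma_k}W_{\gamma_k}(u_k)$. Near $\partial B_{r^-}$, where $\eta_k\to 1$, the factor $(1-\eta_k)^{-\gamma_k}$ is comparable to $d(x,\partial B_{r^-})^{-\gamma_k}$, which is \emph{not radially integrable} once $\gamma_k>1$. Hence if the mismatch set touches $\partial B_{r^-}$ on a set of positive surface measure—and nothing in the hypotheses prevents this, however small its Lebesgue measure—then $J_{\gamma_k}(w_k,A_{i_k})=+\infty$. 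Averaging over radial translates of the cutoff does not help: for a fixed point $x$ in such a configuration, the $t$-average of $(1-\eta_k^{(t)}(x))^{-\gamma_k}$ still contains an integral of the form $\int_0^{\cdot}s^{-\gamma_k}\,ds=\infty$. The $L^1$ smallness of $u_k^{1-\gamma_k/2}-v_k^{1-\gamma_k/2}$ controls only the measure of the mismatch (weighted by $u_k^{1-\gamma_k/2}$, not by $u_k^{-\gamma_k}$), so it cannot absorb the singular prefactor.

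The paper avoids this obstruction by not interpolating $u$ and $v$ directly with a cutoff. After reducing to the ordered case $u\ge v$, it inserts the one-dimensional profile $\psi_r(x)=\varphi(|x|-r)$ as a barrier and sets $\Psi_r=\min\{u,\max\{\psi_r,v\}\}$. In the outer part of the annulus $\psi_r>0$, so $\Psi_r>0$ exactly where $u>0$, and always $u\ge\Psi_r$. The energy of $\Psi_r$ on $\{u>\Psi_r>v\}$ is an exact coarea integral (because $(\varphi')^2=W(\varphi)$), and averaging over $r$ turns it into $\|u^{1-\gamma/2}-v^{1-\gamma/2}\|_{L^1}$. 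Only then is a smooth cutoff applied, $w=\eta\Psi_r+(1-\eta)u$; now $u\ge w\ge\Psi_r>0$ on $\{u>0\}$, which forces $W(w)\le W(\Psi_r)$ and keeps the potential integrable. The missing ingredient in your construction is precisely this use of the optimal one-dimensional profile as a lower barrier; a linear cutoff between $u_k$ and $v_k$ cannot play this role when $\gamma_k>1$.
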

 
 \begin{proof}
Fix $\eps >0$ small. We prove the conclusion with $o(1)$ replaced by $C \eps$ for some $C$ universal. 
Since the energies of $u_k$ and $v_k$ are uniformly bounded, we can decompose the annulus $B_{\rho+ \delta}\setminus B_\rho$ into a disjoint union of $\sim \eps^{-1}$ annuli, and after relabeling $\rho$ and $\delta$ we may assume that
$$J_{\gamma_k}(u_k, B_{\rho+\delta}\setminus B_\rho) \le \eps, \quad J_{\gamma_k}(v_k,B_{\rho+ \delta}\setminus B_\rho) \le \eps.$$
 For simplicity of notation we drop the subindex $k$. 
 
  First we prove the result under the additional assumption 
  \begin{equation}\label{3000}
 \mbox{ $u \ge v$ in $B_{\rho+\delta} \setminus B_\rho$. }
 \end{equation}
 
 Denote by
 $$\psi_r(x)= \varphi (|x|-r), r \in [\rho, \rho + \frac \delta 4],$$
 and let 
 $$\Psi_r = \min \{ u, \max\{\psi_r,v \} \}.$$
 Notice that $$ u \ge \Psi_r \ge v \quad \mbox{in $B_1$, and} \quad  \quad \Psi_r=v \quad \mbox{in $B_{\rho}$.} $$
 Let $$D_r:=\{u> \Psi_r>v \} \cap B_{\rho+\delta},$$
 then, by the property \eqref{1Dp2} of the one-dimensional solution $\varphi$, we find
 \begin{equation}\label{3001}
 J(\Psi_r, D_r)= J(\psi_r, D_r)=\int_0^ 1 \mathcal H^{n-1} (\{\Psi_r =s\} \cap D_r) \, 2 \sqrt{W(s)}ds.
 \end{equation}
 Notice that
$$ \{\Psi_r =s\} \cap D_r = \{u>s>v\}\cap \p B_{r+\varphi^{-1}(s)} \cap B_{\rho+\delta}.$$
Thus, we average \eqref{3001} for $r \in [\rho, \rho+\delta/4],$ and obtain
\begin{align}\label{3002}
\nonumber \fint_{\rho}^{ \rho+\delta/4} J&(\Psi_r,  D_r) dr \le \\
 & \frac{C} {\delta} \int_0^1 \mathcal H^n\left((\{u>s>v\}) \cap (B_{\rho+\delta}\setminus B_\rho) \right) \,  2\sqrt{W(s)}ds.
 \end{align}
 We use \eqref{1Dp0} and the change of coordinates $$s^{1-\gamma/2}= \sigma \quad \mbox{ and obtain} \quad 2\sqrt{W(s)} ds = d \sigma.$$
 The right hand side in \eqref{3002} equals
 \begin{align*}  \frac{C} {\delta} \int_0^1 \mathcal H^n& \left (  \{u^{1-\gamma/2}>\sigma >v^{1-\gamma/2}\}  \cap (B_{\rho+\delta}\setminus B_{\rho}) \right ) d \sigma \\
   \le  & \frac{C} {\delta}\left  \|u^{1-\gamma/2}-v^{1-\gamma/2}\right \|_{L^1(B_{\rho+\delta}\setminus B_{\rho})}.
 \end{align*}
 
  Thus, for all $k$ sufficiently large, we can find an $r=r_k \in [\rho, \rho+\delta/4],$  such that
  $$ J(\Psi_r,D_r) \le \eps.$$
  Since in the annulus $B_{\rho+\delta}\setminus B_{\rho}$ the function $\Psi_r$ coincides with $u$ or $v$ outside  $D_r$ we find
  \begin{equation}\label{3003}
  J(\Psi_r,B_{\rho+\delta}\setminus B_{\rho}) \le 3 \eps. 
  \end{equation}
  Finally we define $$ w= \eta \Psi_r + (1-\eta) u,$$
  with $\eta \in C_0^\infty(B_{\rho + \delta}) $ a cutoff function with $\eta=1$ in $B_{\rho+\delta/2}$. Clearly, $w=u$ outside $B_{\rho+\delta}$ and $w=\Psi_r$ in $B_{\rho+\delta /2}$, hence $w=v$ in $B_\rho$. Moreover, 
  $$u \ge w \ge \Psi_r>0 \quad \Longrightarrow \quad W(w) \le W(\Psi_r) \quad \mbox{in $B_{\rho+\delta}\setminus B_{\rho + \delta/2}$}.$$ Since
  $$|\nabla w|^2 \le 3\left(|\nabla \Psi_r|^2 + |\nabla u|^2 + |\nabla \eta|^2(u-\Psi_r)^2 \right),$$
  we find 
  $$ J(w,B_{\rho+\delta}\setminus B_{\rho}) \le 3 \left(J(\Psi_r,B_{\rho+\delta}\setminus B_{\rho}) + J(u,B_{\rho+\delta}\setminus B_{\rho}) + C(\delta) \| \Psi_r-u\|^2_{L^2} \right).$$
  Using that, 
  $$|u-\Psi_r| \le |u-v|,$$
  we obtain $$C(\delta) \| \Psi_r-u\|^2_{L^2} \to 0 \quad \mbox{as $k \to \infty$}.$$
  We find $$J(w,B_{\rho+\delta}\setminus B_{\rho}) \le 15 \eps,$$ for all large $k$,  which gives the desired conclusion under the assumption \eqref{3000}.
  
  \
  
The general case follows easily from the interpolation procedure between the two ordered functions described above. We apply it two times, first in the annulus $B_{\rho + \delta} \setminus B_{\rho + \delta/2}$ where we interpolate between $u$ and $\min\{u,v\}$ and then in the annulus $B_{\rho + \delta/2} \setminus B_{\rho}$ where we interpolate between $\min\{u,v\}$ and $v$.
 \end{proof}

We recall now the functional $\mathcal F$ introduced in Section 2, which is defined on the space of pairs $(u,E) \in \mathcal A(\Omega)$ 
$$ \mathcal A(\Omega):=\{(u,E)| \quad u \in H^1(\Omega),\quad \mbox{$E$ Caccioppoli set, $u \ge 0$ in $\Omega$, $u=0$ a.e. in $E$} \},$$
given by the Dirichlet - perimeter energy
$$\mathcal F_\Omega(u,E)= \int_\Omega |\nabla u|^2 dx + P_\Omega(E).$$
Here $P_{\Omega}(E)$ represents the perimeter of $E$ in $\Omega$
$$ P_\Omega(E)=[\nabla \chi_E]_{BV(\Omega)}=\int_\Omega |\nabla \chi_E|.$$
 
In the next two lemmas we establish the $\Gamma$-convergence of the $J_\gamma$ to $\mathcal F$.
 \begin{lem}[Lower semicontinuity]\label{L3.5}
 Let $\gamma_k \to 2^-$ and $u_k$ satisfy
  $$ u_k^{1-\gamma_k/2} \to \chi_{E^c} \quad \mbox{in $L^1(\Omega)$}, \quad u_k \to u \quad \mbox{in $L^2(\Omega)$}.$$
  Then
  $$\liminf J_{\gamma_k}(u_k,\Omega) \ge \mathcal F_{\Omega}(u,E).$$

  \end{lem}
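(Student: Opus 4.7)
\bigskip
\noindent\textbf{Proof plan.} The plan is to reduce the estimate to the Modica--Mortola-type inequality and the standard lower semicontinuity of the Dirichlet and total variation functionals. Write $v_k := u_k^{1-\gamma_k/2}$; by the choice of $c_{\gamma_k}$ in \eqref{1Dp00} one has $\sqrt{c_{\gamma_k}} = (1-\gamma_k/2)/2$, so the pointwise identity
\begin{equation*}
2|\nabla u_k|\sqrt{W_{\gamma_k}(u_k)} = (1-\tfrac{\gamma_k}{2})\, u_k^{-\gamma_k/2}|\nabla u_k|\,\chi_{\{u_k>0\}} = |\nabla v_k|
\end{equation*}
holds a.e. in $\Omega$. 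Assume without loss of generality that $\liminf J_{\gamma_k}(u_k,\Omega) < \infty$ and pass to a subsequence saturating the liminf with uniformly bounded energy.

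The core step is the following split of the integrand with two parameters $\lambda \in (0,1)$ and $\delta > 0$: using $\chi_{\{u_k\ge\delta\}}+\chi_{\{u_k<\delta\}} = 1$,
\begin{equation*}
|\nabla u_k|^2+W_{\gamma_k}(u_k) \ge |\nabla u_k|^2\chi_{\{u_k\ge\delta\}} + \bigl[(1-\lambda)|\nabla u_k|^2+W_{\gamma_k}(u_k)\bigr]\chi_{\{u_k<\delta\}},
\end{equation*}
and AM--GM on $\{u_k<\delta\}$ yields the second bracket $\ge \sqrt{1-\lambda}\,|\nabla v_k|\chi_{\{u_k<\delta\}}$. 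Since $|\nabla v_k|\chi_{\{u_k\ge\delta\}} \le (1-\gamma_k/2)\delta^{-\gamma_k/2}|\nabla u_k|$, a Cauchy--Schwarz gives $\int|\nabla v_k|\chi_{\{u_k\ge\delta\}} = o_k(1)$. Noting also $|\nabla u_k|^2\chi_{\{u_k\ge\delta\}} = |\nabla (u_k-\delta)^+|^2$, we obtain
\begin{equation*}
J_{\gamma_k}(u_k,\Omega) \ge \int_\Omega |\nabla (u_k-\delta)^+|^2\,dx + \sqrt{1-\lambda}\int_\Omega |\nabla v_k|\,dx - o_k(1).
\end{equation*}

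Now take $\liminf_k$. Since the sequence $(u_k-\delta)^+$ is bounded in $H^1$ and converges in $L^2$ to $(u-\delta)^+$, it converges weakly in $H^1$, so lower semicontinuity of the Dirichlet energy yields $\liminf_k \int|\nabla(u_k-\delta)^+|^2 \ge \int|\nabla(u-\delta)^+|^2$. Lower semicontinuity of the total variation under $L^1$-convergence $v_k\to\chi_{E^c}$ gives $\liminf_k\int|\nabla v_k|\ge P_\Omega(E)$. Combining these,
\begin{equation*}
\liminf_k J_{\gamma_k}(u_k,\Omega) \ge \int_\Omega|\nabla (u-\delta)^+|^2\,dx + \sqrt{1-\lambda}\,P_\Omega(E).
\end{equation*}
Finally, send $\delta\to 0^+$ using dominated convergence to promote $(u-\delta)^+$ to $u$, and then $\lambda\to 0^+$ to conclude.

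The main subtlety I expect is precisely the need to recover \emph{both} the Dirichlet term and the perimeter term at full weight in the limit: a naive AM--GM on the whole integrand gives only the TV bound, so the trick is the double parameter $(\lambda,\delta)$, which reserves a fraction $1$ of $|\nabla u_k|^2$ on $\{u_k\ge\delta\}$ for the Dirichlet part and only a fraction $1-\lambda$ on the transition region $\{u_k<\delta\}$ for the perimeter part; the losses $\lambda$ and $\delta$ are then taken to $0$ successively after the liminf has been passed inside.
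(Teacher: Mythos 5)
Your argument is correct and is essentially the paper's: both proofs split the energy by a level set of $u_k$, apply the Modica--Mortola/AM--GM inequality $|\nabla u_k|^2 + W_{\gamma_k}(u_k) \ge 2|\nabla u_k|\sqrt{W_{\gamma_k}(u_k)} = |\nabla u_k^{1-\gamma_k/2}|$ on the sublevel set to produce a total variation, retain the Dirichlet term on the superlevel set, pass to the limit by $L^1$-lower semicontinuity of the $BV$-seminorm and weak-$H^1$ lower semicontinuity of the Dirichlet energy, and finally send the threshold to zero. The small cosmetic differences are that the parameter $\lambda$ is superfluous (the paper uses no such weight, and your argument with $\lambda=0$ works identically), that the paper handles the total-variation term by truncating to $\bar u_k = \min\{u_k,\eps\}$ so that $\bar u_k^{1-\gamma_k/2}$ converges in $L^1$ to the indicator (rather than estimating $\int |\nabla v_k|\chi_{\{u_k\ge\delta\}}$ as $o_k(1)$ via Cauchy--Schwarz, which is also fine), and that the final limit $\delta\to 0^+$ of $\int_{\{u>\delta\}}|\nabla u|^2$ is by monotone rather than dominated convergence.
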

\begin{proof} After passing to a subsequence we may assume that the  two convergences above hold pointwise a.e. in $\Omega$. This implies that $ \{u>0\} \setminus E^c$ is a set of measure zero, hence $u=0$ a.e. on $E$, and $(u,E)$ is an admissible pair.

We write
$$J_{\gamma_k}(u_k,\Omega) = J_{\gamma_k}(u_k,\Omega \cap \{u_k \le \eps \}) +  J_{\gamma_k}(u_k,\Omega \cap \{u_k > \eps \}).$$
By the coarea formula and the definition of $W$ (see \eqref{Jf})
\begin{align}\label{300co}
\nonumber J_{\gamma_k}(u_k,\Omega \cap \{u_k \le \eps \}) & \ge \int_{\{u_k \le \eps\}} |\nabla u_k| 2 \sqrt{W(u_k)} dx \\
\nonumber &=\int_{\{u_k \le \eps\}} |\nabla u_k^{1-\gamma_k/2}| dx\\
&=\int_{\Omega} |\nabla \overline u_k^{1-\gamma_k/2}| dx, \quad \mbox{with} \quad \overline u_k:= \min\{u_k, \eps\}.
\end{align}
Moreover, $\overline u_k^{1-\gamma_k/2}$ converges in $L^1$ to $\chi_E$, hence
$$\liminf J_{\gamma_k}(u_k,\Omega \cap \{u_k \le \eps \}) \ge \int_{\Omega} |\nabla \chi_ E| dx,$$
by the lower semicontinuity of the BV norm.
On the other hand 
$$J_{\gamma_k}(u_k,\Omega \cap \{u_k > \eps \}) \ge \int_\Omega |\nabla (u_k-\eps)^+|^2 dx,$$
and since $(u_k -\eps)^+ \to (u-\eps)^+$ in $L^2$, we obtain
$$\liminf J_{\gamma_k}(u_k,\Omega \cap \{u_k > \eps \}) \ge \int_{\Omega} |\nabla(u-\eps)^+|^2 dx.$$
By adding the inequalities we find
$$\liminf J_{\gamma_k}(u_k,\Omega) \ge \int_{\Omega} |\nabla(u-\eps)^+|^2 dx + P_\Omega (E),$$
and the conclusion is proved by letting $\eps \to 0$.
\end{proof}

 \begin{lem}\label{L4}
 Let $(u,E) \in \mathcal A(\Omega)$ with $u$ a continuous function in a Lipschitz domain $\overline \Omega$. Then, given a sequence $\gamma_k \to 2^-$ we can construct a sequence $u_k$ such that
 $$ u_k^{1-\gamma_k/2} \to \chi_{E^c} \quad \mbox{in $L^1(\Omega)$}, \quad u_k \to u \quad \mbox{in $L^2(\Omega)$},$$
 $$ J_{\gamma_k}(u_k,\Omega) \to  \mathcal F_{\Omega}(u,E).$$
 \end{lem}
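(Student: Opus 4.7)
The plan is to prove this part of the $\Gamma$-convergence by an explicit recovery sequence, built from the one-dimensional minimizer $\varphi_{\gamma_k}$ used as the transition profile of vanishing thickness near $\partial E$. A standard diagonal argument, based on the density of smooth sets among Caccioppoli sets (with convergent perimeters) together with approximation of $H^1$ admissible functions by Lipschitz admissible functions, reduces us to the case where $\partial E$ is smooth in $\overline \Omega$ and $u$ is Lipschitz with $u = 0$ on $\overline E$.

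For the construction, let $d(x)$ denote the signed distance to $\partial E$ (positive outside $E$), set $d_+ = \max(d, 0)$, pick the cap $M_k := (2-\gamma_k)^{3/4}$, and let $\tau_k$ be determined by $\varphi_{\gamma_k}(\tau_k) = M_k$; then $\tau_k \to 0$. Define
\begin{equation*}
V_k(x) := \varphi_{\gamma_k}\!\left(\min(d_+(x), \tau_k)\right), \qquad u_k(x) := \max(u(x), V_k(x)).
\end{equation*}
Since $0 \le u_k - u \le M_k \to 0$, we have $u_k \to u$ uniformly. A pointwise analysis (on $E$ where $u_k = 0$; on $E^c$ where either $u > 0$ exceeds $M_k$ eventually, or $u = 0$ and $u_k = M_k$), together with $M_k^{1-\gamma_k/2} \to 1$ and dominated convergence, gives $u_k^{1-\gamma_k/2} \to \chi_{E^c}$ in $L^1$.

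For the energy, decompose $\Omega = E \sqcup S_k \sqcup \{d > \tau_k\}$ with $S_k := \{0 < d \le \tau_k\}$. On $E$ the contribution vanishes. On the outer region, $u_k = \max(u, M_k)$, so its gradient part converges to $\int_\Omega |\nabla u|^2 \, dx$ by monotone convergence (using $\nabla u = 0$ a.e.\ on $\{u = 0\}$), while the potential part is bounded by $|\Omega|\, W_{\gamma_k}(M_k) \to 0$. On the strip, the identity $|\nabla V_k|^2 = W(V_k)$ (from \eqref{1Dp2}) and splitting according to whether $u > V_k$ or $u \le V_k$ give
\begin{equation*}
\int_{S_k}\!(|\nabla u_k|^2 + W(u_k))\,dx = \int_{A_k}\!(|\nabla u|^2 + W(u))\,dx + \int_{B_k}\!2 W(\varphi_{\gamma_k}(d))\,dx,
\end{equation*}
where $A_k := \{u > V_k\} \cap S_k$, $B_k := S_k \setminus A_k$. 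The $B_k$-integral, bounded above by $\int_{S_k} 2W(\varphi_{\gamma_k}(d))\,dx$ and below by this minus a vanishing piece, equals by coarea $\int_0^{\tau_k} 2W(\varphi_{\gamma_k}(t))\, \mathcal H^{n-1}(\{d=t\})\,dt$; smoothness yields $\mathcal H^{n-1}(\{d=t\}) \to P_\Omega(E)$, while the identity $\int_0^{\tau_k} 2\sqrt{W(\varphi_{\gamma_k})}\,\varphi'_{\gamma_k}\,dt = M_k^{1-\gamma_k/2} \to 1$ delivers the limit $P_\Omega(E)$. For the $A_k$-integral, the Lipschitz bound $u \le L\, d$ in $S_k$ forces $A_k \subseteq \{\tau_k^* \le d \le \tau_k\}$ with $\tau_k^* := (c_{\gamma_k}^*/L)^{1/(1-\alpha_k)}$; the analogous identity $\int_{\tau_k^*}^{\tau_k} 2W(\varphi_{\gamma_k})\,dt = M_k^{1-\gamma_k/2} - \varphi_{\gamma_k}(\tau_k^*)^{1-\gamma_k/2} \to 0$ shows that both $\int_{A_k} |\nabla u|^2\,dx$ and $\int_{A_k} W(u)\,dx \le \int_{A_k} W(V_k)\,dx$ vanish in the limit.

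The principal difficulty is the simultaneous balancing of three constraints on the cap $M_k$: it must tend to zero (for $L^2$ convergence of $u_k$), $W_{\gamma_k}(M_k) = (2-\gamma_k)^2 M_k^{-\gamma_k}/16$ must tend to zero (so the outer potential vanishes), and $M_k^{1-\gamma_k/2}$ must tend to one (so the transition layer delivers exactly one copy of $P_\Omega(E)$). For any polynomial choice $M_k = (2-\gamma_k)^a$ the second requirement forces $a < 1$, the first requires $a > 0$, and the third is automatic since $a(2-\gamma_k)|\log(2-\gamma_k)| \to 0$; any $a \in (0,1)$ works, with $a = 3/4$ a comfortable representative.
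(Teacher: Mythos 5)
Your recovery construction and its energy computation are essentially sound and follow the same coarea/1D-profile mechanism as the paper, but both the preliminary reduction and one detail of the coarea limit are glossed over in ways that would need repair.

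\textbf{The reduction step is a real gap.} You reduce to $\partial E$ smooth and $u$ Lipschitz with $u = 0$ on $\overline E$ by appealing to ``density of smooth sets'' and ``approximation of $H^1$ admissible functions by Lipschitz admissible functions.'' The problem is that the admissibility constraint $u=0$ on $E$ is not preserved by generic $H^1$- or Lipschitz-approximation, nor by approximating $E$ in measure alone: mollification or Lipschitz truncation can make the function positive on $E$, and replacing $E$ by a nearby smooth $F$ can leave $u>0$ on part of $F$. The paper handles this by a coordinated shift: since $u$ is continuous, one takes a smooth $F$ with $F\cap\Omega\subset\{u<\eps\}$ (Modica's approximation of Caccioppoli sets) and replaces $u$ by $\tilde u=(u-2\eps)^+$, which then vanishes identically on a full $\delta$-neighbourhood of $F$. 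That trick both restores admissibility exactly \emph{and} makes your strip $S_k=\{0<d\le\tau_k\}$ eventually disjoint from $\{u>0\}$, so $A_k=\emptyset$ for large $k$ and the whole Lipschitz estimate $u\le L d$ becomes unnecessary. A second omission: you claim ``smoothness yields $\mathcal H^{n-1}(\{d=t\})\to P_\Omega(E)$,'' but for a smooth $\partial E$ meeting a Lipschitz $\partial\Omega$ this requires the transversality condition $\mathcal H^{n-1}(\partial F\cap\partial\Omega)=0$, which the paper explicitly imposes via Modica's approximation and which is what justifies the coarea limit.

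As for the ``principal difficulty'' you identify --- tuning a cap $M_k=(2-\gamma_k)^a$ so that it goes to zero, $W_{\gamma_k}(M_k)\to0$ and $M_k^{1-\gamma_k/2}\to1$ simultaneously --- this balancing act is an artefact of the cap, not an intrinsic obstacle. The paper works with the uncapped profile $\varphi_{\gamma_k}(d)$ directly: since $c_{\gamma_k}\to0$, one has $W(\varphi_{\gamma_k}(d))\to0$ uniformly on $\{d>\delta\}$ for any fixed $\delta>0$, so the potential contribution of the outer region tends to zero with no cap and no exponent to tune. Your capped version $V_k=\varphi_{\gamma_k}(\min(d_+,\tau_k))$ is not wrong, and the arithmetic you carry out (the vanishing of the $A_k$ integral, the mass $M_k^{1-\gamma_k/2}\to1$ of the transition layer) is correct, but it is more machinery than the lemma needs. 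The genuine difficulty of the lemma is precisely the step you compressed: constructing the approximating pair $(\tilde u,F)$ so that the transition-layer profile fits in a region where $\tilde u$ vanishes, with transversality at $\partial\Omega$.
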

 
In view of the lower semicontinuity property in $\Omega \setminus \overline D$, where $D \subset \Omega$ is a subdomain, we obtain that
 $$\int_{\overline D} |\nabla u|^2 dx + \int_{\overline D} |\nabla \chi_E| \ge \limsup   J_{\gamma_k}(u_k, D).$$
 \begin{proof}
 For the convergence of the energies it suffices to show that
 $$\limsup J_{\gamma_k}(u_k,\Omega) \le  \mathcal F_{\Omega}(u,E).$$
 Fix $\eps >0$ small. First we approximate $E$ in $\Omega$ by a smooth set $F \subset \R^n$ which is included in the open set $\{u< \eps\}$ in $\Omega$ (which contains a neighborhood of $E$). Precisely, by the results of Modica \cite{M}, there exists a smooth set $F$ which approximates $E$ in $\Omega$ in the sense that
 $$ F \cap \Omega \subset \{ u < \eps \} ,$$
 $$ \|\chi_{F \cap \Omega} - \chi_E\|_{L^1} \le \eps, \quad \quad P_{B_1}(F \cap \Omega) \le P_{\Omega}(E) + \eps,$$
  \begin{equation}\label{3005}\mathcal H^{n-1}(\partial F \cap \p \Omega) =0.
  \end{equation}
In view of this, it suffices to prove the lemma with $E$ replaced by $\tilde E:=F$ and $u$ replaced by $\tilde u:=(u-2\eps)^+$ which is an approximation of $u$ in $H^1(\Omega)$. Notice that by construction $\tilde u$ vanishes in a $\delta$-neighborhood of $\tilde E$ for some small $\delta$. We define $u_k$ in $B_1$ as
$$ u_k:= \max\{ \varphi_k(d), \tilde u\},$$
where $d$ represents the distance in $\R^n$ to $\tilde E$. Next we check that $u_k$ satisfies the desired conclusions.

Clearly $u_k=0$ on $\tilde E$, and using that $$C \ge u_k \ge \varphi_k(d) \quad \mbox{ on} \quad  \Omega \setminus \tilde E,$$ and $1-\gamma_k/2 \to 0^+$ we have 
$$u_k^{1-\gamma_k/2} \to 1 \quad \mbox{in} \quad \Omega \setminus \tilde E,$$
hence
$$ u_k^{1-\gamma_k/2} \to \chi_{\tilde E} \quad \mbox{in $L^1(\Omega)$.}$$
Here we used that  $\varphi_k(d)=c_\gamma^* d^\alpha$, with $c_\gamma^*$ defined in \eqref{1Dp}, and we have
$$(c_\gamma^*)^{1-\gamma/2}  \to 1 \quad \mbox{as $\gamma \to 2$.}$$
Since $\varphi_k(d)$ converges uniformly to $0$ as $k \to \infty$ we also obtain
$$u_k \to \tilde u \quad \mbox{in} \quad L^2(\Omega).$$ 
Using property \eqref{1Dp2} we obtain that
$$ J\left(\varphi(d), \{a<d<b\}\cap \Omega\right) = \int_{\varphi(a)}^{\varphi(b)}  \mathcal H^{n-1}(\{ \varphi(d)=s\} \cap \Omega) 2 \sqrt{W(s)} ds$$
$$=\int_a^b \mathcal H^{n-1}(\{ d=t\} \cap \Omega) \omega_\gamma(t) dt,$$
with
$$\omega_\gamma(t) :=2 \sqrt {W(\varphi(t))} \varphi'(t).$$
Notice that $$\omega_\gamma(t) dt = \varphi'(t) ^2 + W(\varphi(t)) \, dt,$$ represents the measure of the one-dimensional solution which, as $k \to \infty$, converges weakly in any bounded interval $[-a,a]$ to the Dirac delta measure at $0$.
On the other hand \eqref{3005} implies that 
$$ \mathcal H^{n-1}(\{ \varphi(d)=t\} \cap \Omega) \to P_{\Omega}(\tilde E \cap \Omega) \quad \mbox{as $t \to 0$}.$$
In conclusion, we find that as $k \to \infty$
$$ J\left(\varphi(d), \{0<d<\delta\}\cap \Omega\right) \to P_{\Omega}(\tilde E \cap \Omega)$$
and
$$ J\left(\varphi(d), \{d>\delta\}\cap \Omega \right) \to 0.$$
Using that 
$$u_k=\varphi(d) \quad \mbox{if $d < \delta$,}$$
and
$$u_k \ge \varphi(d) \quad \Longrightarrow \quad W(u_k) \le W(\varphi(d)) \quad \mbox{if $d > \delta$,}$$ 
we find 
$$ \limsup J(u_k, \Omega) \le P_{\Omega}(\tilde E \cap \Omega) + \int_{\Omega} |\nabla \tilde u|^2 dx. $$
 \end{proof}
 
 
 We are finally ready to prove our main theorem.
 
 \

 \begin{proof}[Proof of Theorem \ref{TM}]
 The $L^2$ convergence follows from the uniform bound of the $u_k$ in $H^1(\Omega)$. 

By the coarea formula (see \eqref{300co}) we find that
$$[u_k^{1-\gamma_k/2}]_{BV(\Omega)} \le M$$
and using the inequality
$$ u_k^{1-\gamma_k/2} \le 1 + u_k^2$$
we find that $u_k^{1-\gamma_k/2} $ are uniformly bounded in $BV (\Omega)$. Thus, after passing to a subsequence, we have
\begin{equation}\label{3059}
u_k^{1-\gamma_k/2}  \to g \quad \mbox{in $L^1(\Omega)$,}  
\end{equation}
for some non-negative $g \in BV(\Omega)$. We claim that 
\begin{equation}\label{3006}
\mbox{$g=\chi_{E^c}$ for some set $E$.}
\end{equation}

First we show that for all $ \delta>0$ small $$\{\delta \le g \le 1-\delta\} \quad \mbox{has measure zero}.$$
Otherwise, for all large $k$, the set $$\{ \delta/2 \le u_k^{1-\gamma_k/2} \le 1-\delta/2 \} $$
has measure bounded below by a fixed positive constant. On this set
\begin{equation}\label{3007}
W(u_k) \ge W ((1-\delta/2)^\frac{2}{2-\gamma_k}) =c_{\gamma_k} (1-\delta/2)^{-\frac{2\gamma_k}{2-\gamma_k}} \to \infty,
\end{equation}
 as $k \to \infty$ and we contradict the uniform upper bound for the energy of $u_k$ in $\Omega$.
 
 Similarly we find that the set 
 $$\{g \ge 1 + \delta\} \quad \mbox{has measure zero}.$$
Indeed, otherwise 
$$ \{u_k^{1-\gamma_k/2} \ge 1 + \delta/2 \} $$
has measure bounded below by a fixed positive constant. Then we contradict the uniform upper bound for the $L^2$ norm of $u_k$ since on the set above
$$ u_k^2 \ge (1+ \delta/2)^\frac{4}{2-\gamma_k} \to \infty $$
as $k \to \infty$, and the claim \eqref{3006} is proved.
 
 The argument above implies also that 
 \begin{equation} \label{3058}
  \chi_{\{u_k>0\}} \to \chi_{E^c} \quad \mbox{in $L^1(\Omega)$}.
  \end{equation}
  For example if 
  $$ |\{u_k>0\} \setminus E^c| \ge \mu>0 $$
  for some positive constant $\mu$ independent of $k$, then \eqref{3059}-\eqref{3006} imply
  $$|\{0 < u_k^{1-\gamma_k/2} \le \frac 12\}| \ge \mu/2,$$
  and we get a contradiction as in \eqref{3007}.
Also  
$$|E^c \setminus\{u_k>0\}|=|E^c \cap \{u_k=0\}| \to 0, $$
as $k \to \infty$, follows from the convergence \eqref{3059}-\eqref{3006}.

Next we assume that $u_k$ are minimizers for $J_{\gamma_k}$ and prove the minimality of $(u,E)$ for $\mathcal F$. The argument is standard and follows from Proposition \ref{P2}. We sketch it for completeness. 

For simplicity let $\Omega=B_1$. Since the functions $u_k$ are uniformly H\"older continuous on compact sets of $B_1$ we find that the limiting function $u$ is H\"older continuous in $B_1$ and the convergence $u_k \to u$ is uniform on compact subsets. 

Let $(v,F)$ be an admissible pair which coincides with $(u,E)$ near $\p B_1$ and let 
$$\mathcal R:=B_{\rho + \delta} \setminus B_\rho,$$
be an annulus near $\p B_1$ where the two pairs coincide.

Denote by $v_k$ be the functions constructed in Lemma \ref{L4} corresponding to the pair $(v,F)$ in $B_{\rho+\delta}$. Since $u_k$ and $v_k$ satisfy the hypotheses of Proposition \ref{P2} we can construct $w_k$ as the interpolation between $u_k$ and $v_k$. By the minimality of $u_k$ in $B_1$ and the conclusion of Proposition \ref{P2} we have
$$J(u_k,B_1) \le J(w_k,B_1) \le J(u_k, B_1 \setminus B_\rho) + J(v_k, B_{\rho+ \delta}) + o(1).$$
This gives
$$ J(u_k,B_\rho) \le J(v_k, B_{\rho+ \delta}) + o(1),$$
and by taking $k \to \infty$, we find from Lemmas \ref{L3.5} and \ref{L4}
$$ \mathcal F_{B_\rho}(u,E) \le \mathcal  F_{B_{\rho+\delta}}(v,F).$$
We let $\rho \to 1$ and obtain the desired conclusion
$$\mathcal F_{B_1}(u,E) \le \mathcal F_{B_1}(v,F).$$
Finally, the uniform convergence of the free boundaries follows from the uniform density estimates and the $L^1$ convergence \eqref{3058}.
 \end{proof}

\end{document}